\documentclass[10pt,twoside,english,reqno,a4paper]{amsart}

%-------------------------------------------------------------------------------------
% MARGIN SETTINGS
%-------------------------------------------------------------------------------------

\usepackage[footskip=10mm]{geometry}
 \geometry{
 a4paper,
 total={170mm,257mm},
 left=30mm,
 right=26mm,
 top=26mm,
 bottom=25mm,
 }

%-------------------
% DATE
%------------------

\usepackage{datetime}
\usepackage{fancyhdr}
\fancyhead{}
%
%\fancyfoot[L]{\vspace*{.5cm}\footnotesize\today,\, \currenttime}
\pagestyle{fancy}

\setlength{\parindent}{10pt}
\usepackage{amsmath}
\newlength{\defbaselineskip}
\setlength{\defbaselineskip}{\baselineskip}

\setlength{\parindent}{10pt}

%-------------------------------------------------------------------------------------
% PACKAGES
%-------------------------------------------------------------------------------------

\usepackage{listings,graphicx,varioref,color,bm,stmaryrd}

\usepackage{amsmath}% http://ctan.org/pkg/amsmath

\usepackage{amsfonts}
\usepackage{calrsfs}

\usepackage{amssymb}
\usepackage{mathrsfs}
\usepackage{makecell}
\usepackage[mathscr]{euscript}

\usepackage{mathpazo} %PALATINO
\usepackage[scaled=.95]{helvet}
\usepackage{courier}

\usepackage{amscd}

\usepackage[T1]{fontenc}
\usepackage{graphics}
\usepackage[english]{babel}
\usepackage[utf8]{inputenc}
\usepackage[makeroom]{cancel}
\usepackage{pifont}
\usepackage{marvosym}
\usepackage{array}% http://ctan.org/pkg/array
\usepackage{datetime}
%

%-------------------------------------------------------------------------------------
% HYPERREF SETTINGS
%-------------------------------------------------------------------------------------

\definecolor{caribbeangreen}{rgb}{0.0, 0.8, 0.6}
\definecolor{darkpastelgreen}{rgb}{0.01, 0.75, 0.24}
\definecolor{green(pigment)}{rgb}{0.0, 0.65, 0.31}
\usepackage[dvipsnames]{xcolor}
\usepackage{hyperref}
\hypersetup{
colorlinks=true,
linkcolor= red!70!black,%[rgb]{.6,0,0},%{0,.6,0}, %{0,.4,.4},
citecolor= Orange,%[rgb]{0,.7,.7}, %{.5,.5,0},
urlcolor=Blue, %[rgb]{0,.3,.3}
}

\addto\extrasenglish{%
}

\addto\extrasenglish{%
}

\addto\extrasenglish{%
}

\usepackage[capitalize,nameinlink,noabbrev,nosort]{cleveref} % to emulate \autoref style

\crefname{section}{ Section}{ Sections}
\crefname{subsection}{ Subsection}{ Subsections}
\crefname{appendix}{ Appendix}{ Appendices}

\crefname{figure}{ Figure}{ Figures}

\crefname{equation}{}{}

\crefname{definition}{Definition}{Definitions}
%\crefname{defin}{ Definition}{ Definitions}
\crefname{theorem}{ Theorem}{ Theorems}
\crefname{proposition}{ Proposition}{ Propositions}
\crefname{corollary}{ Corollary}{ Corollaries}
\crefname{remark}{ Remark}{ Remarks}
\crefname{lemma}{ Lemma}{ Lemmata}
\crefname{theoa}{ Theorem}{ Theorems}
\crefname{lemb}{ Lemma}{ Lemmata}

%-------------------------------------------------------------------------------------
% PICTURE SETTINGS
%-------------------------------------------------------------------------------------

\usepackage{tikz-cd}
\usetikzlibrary{patterns}
\usepackage{enumerate}
\usepackage{graphics}
\usepackage{pgfplots}
\pgfplotsset{width=7cm,compat=newest}
\usepackage{graphics}
\usepackage{tikz}
\usetikzlibrary{shapes.geometric}
\usepackage{tikz-cd}
%\usetikzlibrary{arrows.meta,graphs,graphdrawing,calc,quotes,plotmarks,intersections}
\usepackage{caption}
\usepackage{subcaption}

%\renewcommand{\thesubfigure}{\arabic{subfigure}}
%\captionsetup[figure]{labelfont=it,textfont={bf,it}}
\usepackage{float}

%-------------------------------------------------------------------------------------
% THEOREM SETTINGS
%-------------------------------------------------------------------------------------

\newtheoremstyle{mytheoremstyle} % name
{.5em}                    % Space above
{0cm}                    % Space below
{\slshape}                   % Body font
{}                           % Indent amount
{\bf}             % Theorem head font
{.}                          % Punctuation after theorem head
{.5em}                       % Space after theorem head
{}  % Theorem head spec (can be left empty, meaning ?normal?)

\theoremstyle{mytheoremstyle}
\newtheorem{theorem}{Theorem}[section]
\newtheorem{proposition}[theorem]{Proposition}
\newtheorem{lemma}[theorem]{Lemma}
\newtheorem{corollary}[theorem]{Corollary}
\newtheorem{definition}[theorem]{Definition}
\newtheorem{remark}[theorem]{Remark}

\setcounter{equation}{0}

\numberwithin{equation}{section}
%%%%%%%%%%%%%%%%%%%%%%%%%%%%%%%%
\long\def\salta#1{\relax}

\usepackage{kantlipsum}%page breaks in aligned eqs
\allowdisplaybreaks%page breaks in aligned eqs

\usepackage[normalem]{ulem}               % to striketrhourhg text
\newcommand\redout{\bgroup\markoverwith
{\textcolor{bor}{\rule[0.5ex]{2pt}{0.8pt}}}\ULon}

%http://latexcolor.com/

\makeatletter

%%%%%%%%%%%%%%%%%%%%%%%%%%%%%%%%
\def\og{\leavevmode\raise.3ex\hbox{$\scriptscriptstyle\langle\!\langle$~}}
\def\fg{\leavevmode\raise.3ex\hbox{~$\!\scriptscriptstyle\,\rangle\!\rangle$}}

% REALLY WIDEHAT
%
%\usepackage{scalerel,stackengine}
%\stackMath
%\newcommand\reallywidehat[1]{%
%\savestack{\tmpbox}{\stretchto{%
%  \scaleto{%
%    \scalerel*[\widthof{\ensuremath{#1}}]{\kern-.6pt\bigwedge\kern-.6pt}%
%    {\rule[-\textheight/2]{1ex}{\textheight}}%WIDTH-LIMITED BIG WEDGE
%  }{\textheight}%
%}{0.5ex}}%
%\stackon[2pt]{#1}{\tmpbox}%
%}
%\parskip 1ex

%-------------------------------------------------------------------------------------
% MACRO
%-------------------------------------------------------------------------------------

\newcommand{\wx}{{\overline x}}
\def\wG{ {\overline G}}

\def\wt{{\overline t}}

\def\wv{{\overline v}}
\def\wc{{\overline c}}

\def\bg{g_0}
\def\TT{{\mathbb{T}^N}}
\def\ZZ{{\mathbb{Z}^N}}
\def\pat{\partial_t}
\def\pa{\partial}

\def\fv{\widehat{v}}

\def\fv{\widehat{v}}

\def\fg{\widehat{G}}
\def\tn{\mathcal{N}}

\def\wtn{\widehat{\tn}}

\newcommand{\Vast}{\bBigg@{3}}
\newcommand{\vast}{\bBigg@{2}}
\newcommand{\vvast}{\bBigg@{1.5}}

%%%%%%%%%%%%%%%%%%%%%%

\def\de{\delta}

\def\D{\Delta }
\def\vp{\varphi}
\def\lll{\langle}
\def\rrr{\rangle}

% Greche!

\def\al{\alpha}
\def\de{\delta}
\def\vp{\varphi}
\def\b{\beta}

\def\lm{\lambda}

\newcommand{\N}{\nabla}
\newcommand{\RR}{\mathbb{R}}

\newcommand{\ds}{\displaystyle}

\DeclareMathOperator{\R}{\mathbb{R}}
\def\qq{\qquad}
\def\q{\quad}

\definecolor{bor}{cmyk}{0.21,0.93,0.86,0.12}
\definecolor{air}{rgb}{0.178, 0.51, 0.51}
\definecolor{range}{cmyk}{0,0.599,1,0.188}

\def\ds{\displaystyle}

\newcommand{\pare}[1]{\left(#1\right)}

\newcommand{\set}[1]{\left\{#1\right\}}
\newcommand{\av}[1]{\left|#1\right|}
\newcommand{\norm}[1]{\|#1\|}
\newcommand{\w}[1]{\widehat{#1}}
\newcommand{\ov}[1]{\overline{#1}}
\renewcommand{\t}[1]{\text{#1}}

\def\cu{c_1}
\def\cd{c_2}
\def\deu{\de_1}
\def\ded{\de_2}

% \usepackage{refcheck}
%%%% Infrastructure
%\makeatletter
%\newcommand{\refcheckize}[1]{%
%  \expandafter\let\csname @@\string#1\endcsname#1%
%  \expandafter\DeclareRobustCommand\csname relax\string#1\endcsname[1]{%
%    \csname @@\string#1\endcsname{##1}\wrtusdrf{##1}}%
%  \expandafter\let\expandafter#1\csname relax\string#1\endcsname
%}
%\makeatother
%%%%
%
%%%% Now we add the reference commands we want refcheck to be aware of
%\refcheckize{\cref}
%\refcheckize{\Cref}

\author[M. Magliocca]{Martina Magliocca}

\address[M. Magliocca]{Department of Mathematical Analysis, 
Faculty of Mathematics, University of Sevilla,
C/Tarfia s/n, Campus Reina Mercedes,
Sevilla 41012, Spain.
\\ \url{mmagliocca@us.es}}

\keywords{Nonlinear fourth-order parabolic equations, thin films, global existence, decay to equilibrium.} \subjclass[2000]{35A01, 35B40, 35G25, 35K30, 35K55.}

\begin{document}

\title{On a fourth order equation describing single-component film models}

\begin{abstract}
We study existence results for a fourth order problem describing  single-component film models assuming initial data in Wiener spaces.
%\red{Since our problem enjoys mass conservation, we study solutions $u$ of the form $u(t,x)=\norm{u_0}_{L^1}+v(t,x)$. Then, a smallness condition on the Wiener semi-norm of $v(0,x)$ will be asked.}
\end{abstract}

\maketitle
\setcounter{tocdepth}{1}
\tableofcontents

\section{Introduction}\label{intro}

The aim of this work consists in analyzing, from a mathematical point of view, a specific physical model which describes controlled solid-state dewetting processes. The importance of this type of processes is due to their several applications.

We here focus on \emph{the single-component film with uniform composition of the bulk and the surface}.
The study of this particular problem has been inspired by the paper proposed by M. Khenner \cite{K}. In his work, M. Khenner developed a theoretical model which describes the  redistribution of the alloy components in the bulk of the film and on its surface during the dewetting process.
\\
A detailed physical description and some applications of this problems are
 proposed in  \cite[Sections II.A \& II.C]{K}. More precisely, the interested reader may find the derivation of this kind of problems in  \cite[Section II.A]{K}, where the binary alloy model considers two different atomic species, while  \cite[Section  II.C]{K} contains the $1$-dimensional evolution PDE (with only one atomic specie)  that we are going to study. \\
Moreover, M. Khenner underlined the link between controlled solid-state dewetting processes and   new technologies based on nonlinear optics, plasmonics, photovoltaics and photocatalysis.
Some concrete  examples are contained in the works of P. Heger et al \cite{1} and R. Santbergen et al \cite{3} as far as optics is concerned,  H. Liao et al \cite{2},  and S. Fafard et al \cite{4} regarding, respectively, nanomedicine and applied physic.

The problem in object is given by the following Mullins type equation:
\begin{equation}\label{K}
\ds\pat u=-V Mg_0u_{xxxx}-
V M\left( \left(-\frac{\cu  d}{d+u}+\frac{\cd  d^2}{(d+u)^2}\right) u_{xx}-\left(
\frac{\cu d}{(d+u)^2}-\frac{2\cd d^2}{(d+u)^3}
\right)  \right)_{xx},
\end{equation}
where $u=u(t,x)>0$ represents the height of the film surface. The complete list of the physical parameters involved in \eqref{K} is contained in \cite[Table $I$]{K} but, for the reader's convenience, we indicate here their most common values.\\
\begin{table}[h]
\renewcommand{\arraystretch}{1.5}
\begin{tabular}{c|c|c|c}
\makecell{\bf Physical \\ \bf parameter\\[2mm]} &\bf  Description &\bf  \makecell{\bf Typical values \\ \bf and  measure unit\\[2mm]} &\bf  Reference \\
\hline
$d$ & Length parameter & $0.5\times10^{-7} $ cm& \cite[C.-H. Chiu]{32}, \cite[Y. R. Niu et al]{34}\\
$V$ & Atomic volume & $10^{-22}$ cm$^3$&\cite[Table $I$]{K}\\
$\bg$ & Surface energy & $2.5\times 10^3$ erg/$\t{cm}^2$&\cite[H.L. Skriver et al]{38}\\
$c_i$ & (see below) & $0.025-2.5$ erg/$\t{cm}^2$&\cite[Z. Suo et al]{31}\\
$M$ & surface mobility   & $4\times 10^{-3}$\t{cm}$\times$ \t{s}/ $\t{g}$& \cite[Tables $I,\,II$]{K}
\end{tabular}
\vspace*{3mm}
\caption{Typical values of the parameters}\label{table}
\end{table}

The terms $-\cu d/(u+d)$ and $\cd d^2/(u+d)^2$ take into account the wetting interaction between the film surface and the substrate.
The term  $-\cu d/(u+d)$ (resp. $\cd d^2/(u+d)^2$) describes the long-range van der Waals-type attraction (resp. the  short-range repulsion). The constants $c_i$ are connected to the Hamaker constant $A_H$
 through the relation $c_i\sim A_H/12\pi d^2$.

As far as \eqref{K} is concerned, the main result provided by M. Khenner \cite[Section $II$.C]{K} is the \emph{linear stability analysis} of \eqref{K}. He thus  exploited a linearization argument about $u(t,x)=u(0,x)$, being $u(0,x)>0$ the film thickness immediately after the film deposition and at the beginning of the annealing phase. He proved that  a film of uniform thickness $u$ is stable (resp. unstable) if $u(0,x)>(<)d(3\cd /\cu -1)$.

\medskip

Our current goal is proving \emph{existence and regularity results} for equation of \eqref{K} type in higher dimensions.
The technique we are going to adopt is inspired to the one used in a work of  R. Granero-Belinchón with the author \cite{GM}. Here, we analysed several fourth order problems modelling the growth of crystal surfaces, among which
\begin{equation}\label{exp}
\pat u=\D e^{-\D u}\qq\t{in}\q (0,T)\times\TT.
\end{equation}
In short, we begin manipulating \eqref{exp} to obtain its quasilinear version
\begin{equation*}%\label{quasilin}
\pat u=-\D^2u+\tn(t,x,u,\ldots,\D^2 u).
\end{equation*}
In this way, we have the harmonic heat equation plus some nonlinear terms depending also on $\D^2 u$. We thus pass to the Fourier formulation and then the proof reduces to proving a suitable a priori estimates and compactness results. \\
A similar approach is contained in the work of J.-G. Liu \& B. Strain \cite{LS}, where they set the equation \eqref{exp} in $\R^N$ and make use of the Fourier transform.\\
This technique works also with several problems arising in Fluid Dynamics. For instance, it has been previously employed to study different type of problems: for instance, we quote the work of  G. Bruell \& R. Granero-Belinchón regarding the evolution of thin films in Darcy and Stokes flows \cite{BGB}, by D. Córdoba and F. Gancedo \cite{CG}, P. Constantin, D. Córdoba, F. Gancedo, Rodriguez-Piazza, \& B. Strain \cite{CCGRS} for the Muskat problem (see also [6] and [16]), by Burczak \& R. Granero-Belinchón \cite{BuGB} to analyze the Keller-Segel system of PDE with diffusion given by a nonlocal operator and by H. Bae, R. Granero-Belinchón \& O. Lazar \cite{BGBL} to prove several global existence results (with infinite $L^p$ energy) for nonlocal transport equations.

\subsection{Notations and basic tools}

We set us in the torus $\TT=[-\pi,\pi]^N$, $N\ge 1$, and assume initial data $u_0>0$ in the Wiener space $A^0(\TT)$, being
\begin{align*}\label{Wienerhomo}
{{A}}^\alpha(\TT)
&=\left\{u(x)\in L^1(\TT):\q  \|u\|_{A^\alpha}:=\sum_{k\in\ZZ} |k|^\alpha|\widehat{u}(k)|<\infty\right\}.
\end{align*}
We also introduce the space of Radon measures from an interval $[0, T ]$ to a Banach
space $X$,
\[
\mathcal{M}(0,T;X).
\]

For a generic function $f(t,x)$, we will often write $f(x)$ (resp. $f(t)$) when referring to $f(\cdot,x)$ (resp. $f(t,\cdot)$).
We write
$$
f,_{j}=\frac{\partial f}{\partial x_j}\q j=1,\ldots, N,
$$
and we adopt Einstein convention for summation.

We name $c$ positive constants which may vary line to line during our incoming proofs.

We will make largely use of the interpolation inequality \cite[Lemma 2.1]{BGB}:
\begin{equation}\label{interpol}
\|u\|_{A^p}\le \|u\|_{A^0}^{1-\theta}\|u\|_{A^q}^{\theta}\q\t{for}\q 0\le p\le q,\,\, \theta=\frac{p}{q}.
\end{equation}

\subsection{The dimensionless problem}\label{app}

The aim of this Section is rewriting the equation \eqref{K} for $N\ge 1$ and in its dimensionless form.
We will represent with $\,\,\ov\cdot\,\,$ all the dimensionless terms involved.\\
Because of physical reasons, we assume $u(0,x)=u_0(x)>0$ in the whole paper.\\

Let
\begin{equation*} 
G(u)=-\frac{c_1d}{d+u}+\frac{c_2d^2}{(d+u)^2}.
\end{equation*}
Then, equation \eqref{K} in higher dimension reads
\begin{align*}
\ds\pat u&=-V Mg_0\D^2u-
V M\D\left(G(u)\D u- G'(u) \right).
\label{dim}
\end{align*}
We observe that
\[
\int_{\TT} \pat u\,dx=0,
\]
from which we deduce the mass conservation
\[
\int_{\TT} u(t,x)\,dx=\int_{\TT} u_0(x)\,dx,
\]
so it is natural to consider solutions of the type
\[
u(t,x)=\norm{u_0}_{L^1}+v(t,x)\q \t{with}\q \int_{\TT} v(t,x)\,dx=0.
\]

We  set the dimensionless variables and the unknown
\[
\wx=\frac{x}{d},\qq \wt=\frac{VMg_0}{d^4}t, \qq \wv(\wx,\wt)=\frac{v(t,x)}{\norm{u_0}_{L^1}+d}.
\]
We also define the dimensionless parameters
\[
 \wc_1=\frac{d}{d+\norm{u_0}_{L^1}}\frac{c_1}{\bg},\qq  \wc_2=\frac{d^2}{\pare{d+\norm{u_0}_{L^1}}^2}\frac{c_2}{\bg},
\]
and the function
\begin{equation*}
\wG(\wv)=-\frac{\wc_1}{1+\wv}+\frac{\wc_2}{(1+\wv)^2}.
\end{equation*}
Then the dimensionless equation (in compact form) reads
\begin{align}\label{compact}
 \pa_{\wt} \wv
&=-\D_{\wx}^2\wv-
\D_{\wx}\left(\wG(\wv)\D_{\wx} \wv- \wG'(\wv) \right).
\end{align}

\medskip

We want to recover a more handy version of \eqref{compact}. To this aim, we pass to the Einstein notation and compute
\begin{align}
 \pa_{\wt} \wv&=-\wv,_{iijj}-
\left(\wG(\wv) \wv,_{ii}- \wG'(\wv)  \right),_{jj}\nonumber\\
&=-\wv,_{iijj}-
\left(\wG(\wv)\wv,_{iijj}+\wG'(\wv)\left[2\wv,_j\wv,_{iij}+\wv,_{jj}\wv,_{ii}\right]+\wG''(\wv)\wv,_j\wv,_j\wv,_{ii}\right)
\nonumber
\\
&\q + \left(\wG''(\wv)\wv,_{jj}+\wG'''(\wv)\wv,_j\wv,_j\right).\label{main}
\end{align}
We want to isolate all the terms which involve both $\D\cdot$ and $\D^2\cdot$. We  take advantage of the following formula:
\begin{align*}
(1+y)^{-m}&=
\sum_{r\ge0} (-1)^r\binom{m+r-1}{m-1}y^r,
\end{align*}
in order to rewrite the $p$-th derivative of $\wG $ as
\begin{align*}
(\wG(\wv))^{(p)}
&=(-1)^pp!
\left[
-\frac{\wc_1}{(1+\wv)^{1+p}}
+(p+1)\frac{\wc_2 }{(1+\wv)^{2+p}}
\right]\\
&=(-1)^pp!
\left[
-\wc_1 \sum_{r\ge0} (-1)^r\binom{p+r}{p}\wv^r
+(p+1)\wc_2\sum_{r\ge0} (-1)^r\binom{p+r+1}{p+1}\wv^r
\right].
\end{align*}
In this way, the terms in \eqref{main} assume the form
\begin{align*}
\wG(\wv) \wv,_{iijj}&=(-\wc_1+\wc_2)\,\wv,_{iijj}+\sum_{r\ge1} (-1)^r\,
\w{\tn}_{0}^r,
\\
\wG'(\wv)\left[2\wv,_j\wv,_{iij}+\wv,_{jj}\wv,_{ii}\right]&=\sum_{r\ge0} (-1)^r\,\w{\tn}_{1}^r,
\\
\wG''(\wv)\wv,_j\wv,_j\wv,_{ii}&=\sum_{r\ge0} (-1)^r\,\w{\tn}_{2}^r,
\\
   \wG''(\wv)\wv,_{jj}&= (-\wc_1+3\wc_2)
\wv,_{jj}+ \sum_{r\ge1} (-1)^r\,\w{\tn}_{3}^r,
\\
 \wG'''(\wv)\wv,_j\wv,_j&= \sum_{r\ge0} (-1)^r\,\w{\tn}_{4}^r,
\end{align*}
being
\begin{align*}
\ov{\tn}_{0}^r&=
[-\wc_1+(r+1)\wc_2]\,\wv^r\wv,_{iijj},
\\
\ov{\tn}_{1}^r&=
[\wc_1(r+1)-2\wc_2(r+2)(r+1)]\,\wv^r
\,(2\wv,_j\wv,_{iij}+\wv,_{jj}\wv,_{ii}),
\\
\ov{\tn}_{2}^r&= 2
[-\wc_1(r+2)(r+1)+3\wc_2(r+3)(r+2)(r+1)]
\,\wv^r
\,\wv,_j\wv,_j\wv,_{ii} ,\\
\ov{\tn}_{3}^r&=2 [-\wc_1(r+2)(r+1)+3\wc_2(r+3)(r+2)(r+1)]\,
\wv^r
\wv,_{jj},
\\
\ov{\tn}_{4}^r&=3! [\wc_1(r+3)(r+2)(r+1)-4\wc_2(r+4)(r+3)(r+2)(r+1)]\,
\wv^r
\wv,_j\wv,_j.
\end{align*}
We conclude this algebraic part rewriting the equation \eqref{main} in the following way:
\begin{align*}
\ds  \pa_{\wt} \wv&=-(1+\wc_2-\wc_1)\wv,_{iijj}-2 \pare{\wc_1-3\wc_2}\,
\wv,_{jj}\nonumber\\
&\q-\sum_{r\ge1} (-1)^r\,
\pare{\ov{\tn}_{0}^r-\ov{\tn}_{3}^r}-
\sum_{r\ge0} (-1)^r\pare{\ov{\tn}_{1}^r+\ov{\tn}_{2}^r-\ov{\tn}_{4}^r} .%\label{eqq}
\end{align*}
Note that the nonlinear character of \eqref{compact} is contained only in the $\ov{\tn}^r_i$ terms.

\medskip

\begin{remark}
We observe that, unlike in \cite{GM}, equation \eqref{compact} is not invariant under the scaling $(x,t)\to(\lm^4 x,t)$, and this is due to the presence of lower order terms which act as perturbations. However, this scaling is still invariant for the $A^0$ semi-norm.
\end{remark}

\section{Main results}
 In what follows, we omit the $\,\,\ov{\cdot}\,\,$ superscript.\\
We now focus on the dimensionless problem
\begin{equation}\label{pb}\tag{P}
\begin{cases}
\begin{array}{ll}
\ds\pat v
=-\D^2v-
\D\left( G(v)\D v-  G'(v) \right)  & \text{in}\q  (0,T)\times\TT, \\
\ds v(0,x)=v_0(x) &\t{in}\q \TT,
\end{array}
\end{cases}
\end{equation}
being $G:(0,\infty)\to\RR$ continuous and defined as
\begin{equation}\label{G}
G(v)=-\frac{\cu }{1+v}+\frac{\cd }{(1+v)^2},\qq c_i>0.
\end{equation}

The notion of solution we are going to consider is given below.
\begin{definition}\label{defsol}
We say that a function $v$ is a weak solution of \eqref{pb} if
\[
v\in L^\infty( (0,T)\times\TT)\cap L^2(0,T;H^2(\TT))
\]
and verifies the following weak formulation:
\[
-\int_\TT v_0\vp(0)\,dx+\iint_{\TT\times(0,T)}- v\pat\vp+v\D^2\vp+\left(  G(v)\D v-   G'(v) \right)\D\vp\,dx\,dt=0
\]
for every $\vp\in W^{1,1}(0,T;L^1(\TT))\cap L^1(0,T;W^{4,1}(\TT))\cap L^2(0,T;H^2(\TT))$.
\end{definition}

\medskip
We begin presenting our existence results.

\begin{theorem}[Existence and regularity results with $A^0$ data]\label{teoex}
Assume that the parameters $\cu$, $\cd$ verify
\begin{equation}\label{poscost}
\cd>\max\set{c_1-1,\frac{\cu}{3}}.
\end{equation}
Let   $v_0\in A^0(\TT)$ such that
\[
\norm{v_0}_{A^0}<1.
\]
Defined the values $\de_i=\de_i(\norm{v_0}_{A^0})$  as
{\begin{align}
\deu &=\norm{v_0}_{A^0}
\left[\frac{\cu }{1-\norm{v_0}_{A^0}}+
\frac{\cd (2-\norm{v_0}_{A^0})+3\cu
}{(1-\norm{v_0}_{A^0})^2}
+2\frac{3\cd
+\cu \norm{v_0}_{A^0}}{(1-\norm{v_0}_{A^0})^3}+\frac{3!\cd \norm{v_0}_{A^0}}{(1-\norm{v_0}_{A^0})^4}\right],
\label{de1}\\
%---------
\ded &=
\norm{v_0}_{A^0}
\left[
2\cu \frac{3-3\norm{v_0}_{A^0}+\norm{v_0}_{A^0}^2}{(1-\norm{v_0}_{A^0})^3}\right.\nonumber\\
&\q\qq\qq\left.+3!
\frac{\cd (4-6\norm{v_0}_{A^0}+4\norm{v_0}_{A^0}^2-\norm{v_0}_{A^0}^3)+\cu }{(1-\norm{v_0}_{A^0})^4}
+\frac{4!\cd }{(1-\norm{v_0}_{A^0})^5}
\right],
\label{de2}
\end{align}}
we also require $\norm{v_0}_{A^0}$ small enough in order to have
\begin{equation}\label{smallness}
D_1=1+\cd -\cu -\deu >0\q\t{and}\q D_2=2 (3\cd-\cu)-\ded >0.
\end{equation}
Then, there exist at least one global weak solution to equation \eqref{pb} such that
\begin{equation}\label{ex1}
v\in  L^\infty( (0,T)\times \TT)\cap  \mathcal{M}(0,T;W^{4,\infty}(\TT))\cap L^2(0,T;{H}^2(\TT))
\end{equation}
and
\begin{equation}\label{ex3}
v\in  L^{\frac{4}{r}}(0,T;W^{r,\infty}(\TT))\cap L^{4}(0,T;W^{1,\infty}(\TT))\q\text{for}\q r=1,2,3,
\end{equation}
for any $T>0$. Furthermore, we also have the continuity regularity
\begin{equation}\label{ex2}
v\in {C}([0,T];L^2(\TT)),
\end{equation}
and the exponential decay
\begin{equation}\label{dec1A0}
\|v(t)\|_{L^\infty}\leq \norm{v_0}_{A^0}\exp\pare{-( D_1+ D_2)t}\qq\forall t\in (0,T).
\end{equation}
\end{theorem}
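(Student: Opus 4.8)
The plan is to work entirely on the Fourier side, exploiting that the Wiener space $A^0(\TT)$ is a Banach algebra and that, since $v$ is mean-zero (mass is conserved and $\int_\TT v_0\,dx=0$), every nonzero frequency satisfies $|k|\ge 1$, so that $\fv(0,t)=0$ for all $t$. I would start from the modal ODEs obtained by Fourier-transforming the expanded equation \eqref{main}: for each $k\in\ZZ\setminus\{0\}$,
\[
\frac{d}{dt}\fv(k)=-\bra{(1+\cd-\cu)|k|^4+2(3\cd-\cu)|k|^2}\fv(k)+\w{\tn}(k),
\]
where $\w{\tn}$ gathers the Fourier transforms of all the nonlinear contributions $\ov{\tn}^r_i$. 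Using $\frac{d}{dt}|\fv(k)|\le \Re\big[\mathrm{sgn}(\overline{\fv(k)})\,\tfrac{d}{dt}\fv(k)\big]$ and summing over $k$, the linear part produces exactly $-(1+\cd-\cu)\|v\|_{A^4}-2(3\cd-\cu)\|v\|_{A^2}$, because $\sum_k|k|^4|\fv(k)|=\|v\|_{A^4}$ and $\sum_k|k|^2|\fv(k)|=\|v\|_{A^2}$.

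The heart of the matter is to bound $\sum_k|\w{\tn}(k)|=\|\tn\|_{A^0}$. Here I would use the algebra inequality $\|fg\|_{A^0}\le\|f\|_{A^0}\|g\|_{A^0}$ together with $\|v^r\|_{A^0}\le\|v\|_{A^0}^r$ and $\|\pa^\beta v\|_{A^0}\le\|v\|_{A^{|\beta|}}$. Each of $\ov{\tn}^r_0,\ov{\tn}^r_1,\ov{\tn}^r_2$ carries a total of four derivatives, so it is controlled by a product of $\|v\|_{A^0}^r$ with norms $\|v\|_{A^{m_1}}\|v\|_{A^{m_2}}\cdots$ whose orders sum to $4$; the interpolation inequality \eqref{interpol} then collapses each such product to $\|v\|_{A^0}^{r+s}\|v\|_{A^4}$ for some $s\ge0$ (namely $s$ equal to the number of differentiated factors minus one). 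Likewise $\ov{\tn}^r_3,\ov{\tn}^r_4$ carry two derivatives and reduce to $\|v\|_{A^0}^{r+s}\|v\|_{A^2}$. Summing the resulting series in $r$, which converges precisely because $\|v\|_{A^0}<1$ and the combinatorial coefficients grow only polynomially (each $\tfrac1{(1-\|v_0\|_{A^0})^p}$ collecting a family $\sum_r\binom{r+p-1}{p-1}\|v_0\|_{A^0}^r$), reproduces the closed-form constants $\deu(\|v\|_{A^0})$ and $\ded(\|v\|_{A^0})$ of \eqref{de1}--\eqref{de2} as the respective multipliers of $\|v\|_{A^4}$ and $\|v\|_{A^2}$. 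The master differential inequality becomes
\[
\frac{d}{dt}\|v\|_{A^0}\le -\bra{(1+\cd-\cu)-\deu}\|v\|_{A^4}-\bra{2(3\cd-\cu)-\ded}\|v\|_{A^2}=-D_1\|v\|_{A^4}-D_2\|v\|_{A^2},
\]
valid as long as $\|v\|_{A^0}\le\|v_0\|_{A^0}$, since $\deu,\ded$ are increasing in their argument.

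Under \eqref{poscost} and the smallness \eqref{smallness} the right-hand side is nonpositive, so $\|v\|_{A^0}$ is nonincreasing and the bound $\|v\|_{A^0}\le\|v_0\|_{A^0}<1$ is self-propagating; this closes the bootstrap. Using $|k|\ge 1$ to get $\|v\|_{A^4},\|v\|_{A^2}\ge\|v\|_{A^0}$ yields $\frac{d}{dt}\|v\|_{A^0}\le-(D_1+D_2)\|v\|_{A^0}$, and Gronwall together with $\|v\|_{L^\infty}\le\|v\|_{A^0}$ gives the decay \eqref{dec1A0}. Integrating the master inequality in time provides $v\in L^1(0,T;A^4)$; combining this with $v\in L^\infty(0,T;A^0)$ through \eqref{interpol} gives \eqref{ex3}, while the embeddings $A^m\hookrightarrow W^{m,\infty}$ and the elementary bound $\|v\|_{H^2}^2\lesssim\|v\|_{A^0}\|v\|_{A^4}$ furnish \eqref{ex1} and $v\in L^2(0,T;H^2)$.

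To make this rigorous I would run the estimate on a frequency-truncated (Galerkin) scheme $v_n$, projecting onto $\{|k|\le n\}$, for which all the manipulations above are legitimate finite-dimensional ODE computations and the bounds are uniform in $n$. The uniform $L^\infty(0,T;A^0)\cap L^1(0,T;A^4)$ control, together with an $L^2(0,T;H^{-2})$ bound on $\pat v_n$ read off from the equation, allows passage to the limit by Aubin--Lions compactness, recovering a weak solution in the sense of Definition \ref{defsol}; the Lions--Magenes lemma then yields the continuity \eqref{ex2}. The main obstacle is the nonlinear estimate: one must check that after interpolation every four-derivative term genuinely reduces to a scalar multiple of $\|v\|_{A^4}$ (and every two-derivative term to a multiple of $\|v\|_{A^2}$) with no leftover higher-order norm, and that the polynomial-in-$r$ coefficients sum to exactly the expressions \eqref{de1}--\eqref{de2}. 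This combinatorial bookkeeping, together with the sign conditions \eqref{poscost} that let $\deu,\ded$ be absorbed into the linear dissipation, is where essentially all the work lies.
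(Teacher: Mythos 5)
Your proposal is correct and takes essentially the same route as the paper: the identical Fourier-side differential inequality for the $A^0$ semi-norm (algebra/Tonelli estimates plus interpolation \eqref{interpol} collapsing each term to a multiple of $\|v\|_{A^4}$ or $\|v\|_{A^2}$, geometric-type summation in $r$ reproducing $\deu,\ded$), the same bootstrap via monotonicity of $\de_i$ in $\|v(t)\|_{A^0}$, Gronwall with $\|v\|_{A^0}\le\|v\|_{A^2},\|v\|_{A^4}$ for the decay, and the same compactness machinery ($\pat v_n$ bounded in $L^2(0,T;H^{-2}(\TT))$, Aubin--Lions/Simon, weak-$*$ lower semicontinuity for \eqref{dec1A0}). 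The only, immaterial, difference is the approximating scheme: you project onto frequencies $|k|\le n$ keeping the full nonlinearity $G$ (which is fine since the propagated bound $\|v_n\|_{L^\infty}\le\|v_n\|_{A^0}<1$ keeps $G(v_n)$ and its expansion well defined), whereas the paper truncates the power series of $G$ to a polynomial $G_n$ and constructs solutions of the resulting problems by Faedo--Galerkin or a Rothe scheme; both choices yield the same uniform estimates.
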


\medskip

Note that the function $G $ defined in \eqref{G} is a continuous bounded function for $v\ge 0$ and it may be negative. In the following result we will need $1+G(v)$ to be strictly positive.  It is for this reason that we modify \eqref{poscost} as \eqref{poscost-bis}.

\medskip

\begin{theorem}[Regularity results with $A^0(\TT)\cap H^2(\TT)$ data]\label{teoreg}
Assume that the parameters $c_i$ verify% \eqref{poscost} and that
\begin{equation}\label{poscost-bis}
c_2>\max\set{c_1-1,\frac{c_1}{3},\frac{c_1^2}{4}}.
\end{equation}
Let $v_0\in A^0(\TT)\cap H^2(\TT)$ satisfy
\[
\norm{v_0}_{A^0}<1,
\]
and the smallness condition in \eqref{smallness} (see also\eqref{de1}, \eqref{de2}). Then, the solution also verifies
\begin{equation}\label{reg1}
v\in {C}([0,T];{H}^2(\TT))\cap L^2(0,T;{H}^4(\TT)),
\end{equation}
\begin{equation}\label{H2H4}
\|v(t)\|_{H^2}+C_1\int_0^T \|v(t)\|_{H^4}^2\,dt\leq C_2,
\end{equation}
where $C_1=C_1(\cu ,\,\cd )$ and $C_2=C_2(\norm{v_0}_{A^0})$.
\end{theorem}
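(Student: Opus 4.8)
The plan is to upgrade the weak solution produced in Theorem~\ref{teoex} by a higher-order energy estimate at the $\dot{H}^2$ level, using the biharmonic dissipation to control the $H^4$ norm. Throughout I work formally on \eqref{pb}; the computations are made rigorous by carrying them out on the Galerkin/regularised approximations underlying Theorem~\ref{teoex} and passing to the limit, exactly as for \eqref{ex1}--\eqref{ex3}. The decisive new structural input is the positivity of $1+G(v)$: setting $s=(1+v)^{-1}$ one has $1+G(v)=1-c_1 s+c_2 s^2$, a quadratic whose discriminant $c_1^2-4c_2$ is negative precisely under \eqref{poscost-bis}, so that
\[
1+G(v)\ge \kappa:=1-\frac{c_1^2}{4c_2}>0
\]
uniformly in $v$. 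This is exactly why \eqref{poscost} is strengthened to \eqref{poscost-bis}, as anticipated in the preceding remark.

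First I would test \eqref{pb} with $\D^2 v$. Since $v$ has zero mean on $\TT$ (mass conservation), $\int_\TT \D^2 v\,\pat v\,dx=\tfrac12\frac{d}{dt}\norm{\D v}_{L^2}^2$, and the principal term yields $-\norm{\D^2 v}_{L^2}^2$. Expanding the nonlinearity by the Leibniz rule (\emph{without} integrating by parts, so as not to raise the order) gives
\[
\D\pare{G(v)\D v-G'(v)}=G(v)\,\D^2 v + R(v),
\]
where $R(v)$ collects the lower-order contributions $2G'(v)\N v\cdot\N\D v$, $G'(v)(\D v)^2$, $G''(v)|\N v|^2\D v$, $-G''(v)\D v$ and $-G'''(v)|\N v|^2$, each involving at most third-order derivatives of $v$. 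Pairing the leading term with $\D^2 v$ and combining with the dissipation produces the \emph{coercive} quantity $\int_\TT(1+G(v))|\D^2 v|^2\,dx\ge\kappa\norm{\D^2 v}_{L^2}^2$, so that
\[
\frac12\frac{d}{dt}\norm{\D v}_{L^2}^2+\kappa\norm{\D^2 v}_{L^2}^2\le \left|\int_\TT \D^2 v\,R(v)\,dx\right|.
\]

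Second, I would absorb the right-hand side. The critical term is $\int_\TT \D^2 v\,G'(v)\,\N v\cdot\N\D v\,dx$: since $G'(v)$ is bounded (because $\norm{v}_{L^\infty}<1$ by Theorem~\ref{teoex}, so $v>-1$), Cauchy--Schwarz together with the interpolation $\norm{v}_{\dot{H}^{3}}\le\norm{\D v}_{L^2}^{1/2}\norm{\D^2 v}_{L^2}^{1/2}$ bounds it by $C\norm{\N v}_{L^\infty}\norm{\D^2 v}_{L^2}^{3/2}\norm{\D v}_{L^2}^{1/2}$; Young's inequality then splits off $\tfrac{\kappa}{4}\norm{\D^2 v}_{L^2}^2$ and leaves $C\norm{\N v}_{L^\infty}^4\norm{\D v}_{L^2}^2$. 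The remaining terms of $R(v)$ are of strictly lower order and are treated the same way, via Gagliardo--Nirenberg interpolation and the a priori bounds \eqref{ex1}--\eqref{ex3}; after absorbing a further fraction of the dissipation one arrives at
\[
\frac{d}{dt}\norm{\D v}_{L^2}^2+\kappa\norm{\D^2 v}_{L^2}^2\le g(t)\,\norm{\D v}_{L^2}^2+h(t),
\]
with $g,h\in L^1(0,T)$; crucially $g(t)\sim\norm{\N v}_{L^\infty}^4\in L^1(0,T)$ thanks to $v\in L^4(0,T;W^{1,\infty}(\TT))$ from \eqref{ex3}.

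Finally, Grönwall's lemma yields $\sup_{[0,T]}\norm{\D v(t)}_{L^2}^2+\int_0^T\norm{\D^2 v}_{L^2}^2\,dt\le C$ depending only on the data; since $v$ has zero mean, Poincaré's inequality upgrades this to \eqref{H2H4} with $C_1\sim\kappa=C_1(c_1,c_2)$ and $C_2=C_2(\norm{v_0}_{A^0},\norm{v_0}_{H^2})$, giving $v\in L^\infty(0,T;H^2)\cap L^2(0,T;H^4)$. Reinserting these bounds into \eqref{pb} shows $\pat v\in L^2(0,T;L^2(\TT))$, whence the continuity $v\in C([0,T];H^2(\TT))$ of \eqref{reg1} follows by the standard Lions--Magenes/Aubin--Lions embedding. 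I expect the main obstacle to be the rigorous handling of the cubic term $\int \D^2 v\,G'(v)\,\N v\cdot\N\D v$ — keeping all constants consistent so that the biharmonic dissipation genuinely dominates — and, secondarily, justifying the formal energy identity on the approximating sequence so that the limit inherits \eqref{H2H4}.
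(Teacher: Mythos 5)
Your proposal is correct, and its skeleton coincides with the paper's: \cref{propreg} also tests with $\D^2 v$, also isolates $\int_{\TT}(1+G_n(v))|\D^2v|^2\,dx$, and also observes that $\min_v G(v)=-\cu^2/(4\cd)$, which is precisely why \eqref{poscost} is strengthened to \eqref{poscost-bis} --- your discriminant computation for $1-\cu s+\cd s^2$, $s=(1+v)^{-1}$, is the same fact. Where you genuinely diverge is the treatment of the critical cubic term $I_2=-2\int_{\TT}G_n'(v)\,v,_{\ell}v,_{ii\ell}v,_{jjkk}\,dx$ and hence the shape of the Gr\"onwall inequality. The paper never absorbs $I_2$ into the dissipation: it runs a chain of integrations by parts that symmetrizes the third derivatives (ending with $\tfrac12\int G_n''(v)|\N v|^2 v,_{jki}v,_{jki}+\tfrac12\int G_n'(v)\D v\, v,_{jki}v,_{jki}$), so every surviving integrand involves at most third-order derivatives; these are estimated through the Wiener-space inequalities \eqref{l2}--\eqref{l42}, yielding $\frac{d}{dt}\norm{v}_{H^2}^2\le c(1+\norm{v}_{A^0})\norm{v}_{A^4}(1+\norm{v}_{H^2}^2)$ with Gr\"onwall weight $\norm{v}_{A^4}\in L^1(0,\infty)$ from \eqref{disA4}; indeed, in \eqref{abc} the dissipation is simply discarded for the $L^\infty H^2$ bound, and the coercivity $1+G_n>0$ is exploited only when \eqref{star} is integrated in time to obtain $L^2(0,T;H^4)$ as in \eqref{dis}. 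You instead keep the quantitative constant $\kappa=1-\cu^2/(4\cd)$ and absorb the top-order part of $I_2$ via $\norm{v}_{\dot H^3}\le\norm{v}_{\dot H^2}^{1/2}\norm{v}_{\dot H^4}^{1/2}$ and Young, with weight $\norm{\N v}_{L^\infty}^4\in L^1$ from \eqref{ex3}. Both mechanisms are sound; yours is shorter and makes $C_1\sim\kappa$ explicit, but two points deserve care to match the paper's generality: (i) the paper works on $\TT^N$ for every $N$, so your ``Gagliardo--Nirenberg'' handling of the remaining terms of $R(v)$ should be replaced by the dimension-free bounds $\norm{\N v}_{L^\infty}\le\norm{v}_{A^1}$ together with \eqref{interpol} (this is exactly how \eqref{ex3} and \eqref{l2}--\eqref{l42} are derived); (ii) to get $C_2$ uniform in $T$ you should estimate the linear pieces (e.g.\ $G_n''(v)\D v$ against $\D^2 v$) by $\norm{v}_{A^2}^2\le\norm{v}_{A^0}\norm{v}_{A^4}\in L^1(0,\infty)$ rather than by Young with a constant coefficient, otherwise your $g\in L^1(0,T)$ has $\int_0^T g\sim T$ and Gr\"onwall produces a spurious factor $e^{cT}$. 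Your continuity argument ($\pat v\in L^2(0,T;L^2)$ plus Lions--Magenes) is a legitimate variant of the paper's ($\pat\D v\in L^2(0,T;H^{-2})$ in \cref{propcomp}), and your remark that $C_2$ also depends on $\norm{v_0}_{H^2}$ is accurate --- the paper's own final estimate contains $\norm{v_0}_{H^2}$ even though the statement suppresses this dependence.
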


\section{Existence and regularity results with  $v_0\in A^0(\TT)$}

In order to prove our existence results, we will make use of the following approximating problem
\begin{equation}\label{appr}\tag{$\t{P}_n$}
\begin{cases}
\begin{array}{ll}
\ds \pat v_n=
-\D^2v_n-
\D\left( G_n(v_n)\D v_n-  G_n'(v_n) \right) & \text{in}\q  (0,T)\times\TT, \\
\ds v_n(0,x)=v_0(x) &\t{in}\q \TT,
\end{array}
\end{cases}
\end{equation}
with
\begin{equation}\label{eq:Gn}
G_n(v_n)=-c_1\sum_{r=0}^{n}(-1)^rv_n^r+c_2\sum_{r=0}^{n}(-1)^r(r+1)v_n^r.
\end{equation}

\begin{remark}[On the approximating problem]
The solutions $\{v_n\}_n$ to \eqref{appr} can be constructed through the Faedo-Galerkin method. \\
Another possibility, is using a Rothe type approximation scheme  (see, for instance, \cite[Section 2]{W}, \cite[Theorem 4.3]{SW}) whose main ingredient is discretizing in the time variable. The next steps are a fixed point argument to prove the existence of solutions of this new elliptic problem, and 
 finding suitable estimates which do not depend on the discretization parameter  to recover the time variable.\\
The approximating solutions $\{v_n\}_n$ belong to $C^1(0,T;A^0(\TT))\cap L^1(0,T;A^4(\TT))$. The rigorous proof of this claim can be done  by following these steps. \\ We consider the regularized problem
\[
\begin{cases}
\begin{array}{ll}
\ds \pat v_n^{(\kappa)}=
-\mathscr{J}_{\kappa}*\D^2\mathscr{J}_{\kappa}*v_n^{(\kappa)}-
\mathscr{J}_{\kappa}*\D\left( G_n(v_n^{(\kappa)})\D\mathscr{J}_{\kappa}* v_n^{(\kappa)}-  G_n'(v_n^{(\kappa)}) \right) & \text{in}\q  (0,T)\times\TT, \\
\ds v_n^{(\kappa)}(0,x)=\mathscr{J}_{\kappa}*v_0(x) &\t{in}\q \TT,
\end{array}
\end{cases}
\]
being $\mathscr{J}_{\kappa}$  mollifiers. Then, following the arguments in \cite[Chapter 3]{MB}, we obtain that  $v_n^{(\kappa)}$ belongs to $C^1(0,T;A^0(\TT))$. 
It can be proved that $v_n^{(\kappa)}\in L^1(0,T;A^4(\TT))$ reasoning as in Proposition \ref{prioriw}. 
Now, we only need  a suitable $\kappa$-uniform estimates   to take the limit in this parameter, and then deducing that $\{v_n\}_n\in C^1(0,T;A^0(\TT))\cap L^1(0,T;A^4(\TT))$.
\end{remark}

\medskip

\begin{proposition}[A priori estimates in Wiener spaces]\label{prioriw}
Let the parameters $c_i$ verify \eqref{poscost}.
Assume that $v_0\in A^0(\TT)$ satisfies
\[
\norm{v_0}_{A^0}<1,
\]
and the smallness condition in \eqref{smallness} (see also \eqref{de1}, \eqref{de2}).
Then, every sequence $\{v_n\}_n$ of solutions of \eqref{appr} is uniformly bounded in
\begin{equation}\label{unifbouwiener}
W^{1,1}(0,T;A^0(\TT))\cap L^1(0,T;A^4(\TT)).
\end{equation}
Furthermore,
\begin{equation}\label{decun}
\|v_n(t)\|_{A^0}\le \exp\pare{
-\pare{ D_1+ D_2}t
} \|v_0\|_{A^0},
\end{equation}
for $D_i$ defined in \eqref{wdeu}--\eqref{wded}.
\end{proposition}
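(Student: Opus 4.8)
The plan is to work entirely on the Fourier side and to reduce the whole statement to a single differential inequality for $t\mapsto\norm{v_n(t)}_{A^0}$. Reading the expanded equation \eqref{main} in Fourier variables, and using that $v_{,iijj}=\D^2 v$ and $v_{,jj}=\D v$ have symbols $|k|^4$ and $-|k|^2$, the solution $v_n$ of \eqref{appr} obeys, for every $k\in\ZZ\setminus\{0\}$,
\[
\pat\w{v}_n(k)=-\bra{(1+\cd-\cu)|k|^4+2(3\cd-\cu)|k|^2}\w{v}_n(k)+\w{\tn}(k),
\]
where $\w{\tn}(k)$ is the Fourier coefficient of the sum of all the nonlinear terms $\ov{\tn}_i^r$ built from the truncated $G_n$ of \eqref{eq:Gn}. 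Since the zero mode vanishes by mass conservation, $\norm{v_n}_{A^0}=\sum_{k\ne0}|\w{v}_n(k)|$; testing the equation against $\overline{\w{v}_n(k)}/|\w{v}_n(k)|$ and taking real parts gives $\pat|\w{v}_n(k)|\le-[\,\cdots\,]|\w{v}_n(k)|+|\w{\tn}(k)|$. Summing over $k\ne0$ and recognising $\sum_k|k|^4|\w{v}_n(k)|=\norm{v_n}_{A^4}$ and $\sum_k|k|^2|\w{v}_n(k)|=\norm{v_n}_{A^2}$, I obtain
\[
\frac{d}{dt}\norm{v_n}_{A^0}\le-(1+\cd-\cu)\norm{v_n}_{A^4}-2(3\cd-\cu)\norm{v_n}_{A^2}+\sum_{k\ne0}|\w{\tn}(k)|.
\]

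The heart of the matter is to absorb the nonlinear sum into the two dissipative terms. Since $A^0$ is a Banach algebra for the convolution and each derivative contributes one factor $|k|$, I have bounds such as $\norm{v^r v_{,iijj}}_{A^0}\le\norm{v_n}_{A^0}^r\norm{v_n}_{A^4}$ and $\norm{v_{,j}}_{A^0}\le\norm{v_n}_{A^1}$. Counting derivatives, the terms $\ov{\tn}_0^r,\ov{\tn}_1^r,\ov{\tn}_2^r$ carry four derivatives, and the interpolation inequality \eqref{interpol} (for instance $\norm{v_n}_{A^2}^2\le\norm{v_n}_{A^0}\norm{v_n}_{A^4}$ and $\norm{v_n}_{A^1}\norm{v_n}_{A^3}\le\norm{v_n}_{A^0}\norm{v_n}_{A^4}$) collapses each of their $A^0$-norms to $\norm{v_n}_{A^0}^{r+s}\norm{v_n}_{A^4}$ with $s\in\{0,1,2\}$, whereas $\ov{\tn}_3^r,\ov{\tn}_4^r$ carry two derivatives and collapse to $\norm{v_n}_{A^0}^{r+s}\norm{v_n}_{A^2}$. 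Inserting the explicit coefficients of the $\ov{\tn}_i^r$ and summing the resulting power series in $r$ --- which converge because $\norm{v_n}_{A^0}<1$, each $\sum_r\mathrm{poly}(r)\,\norm{v_n}_{A^0}^r$ being a derivative of a geometric series --- reproduces exactly $\deu(\norm{v_n}_{A^0})$ and $\ded(\norm{v_n}_{A^0})$ of \eqref{de1} and \eqref{de2}. Hence
\[
\frac{d}{dt}\norm{v_n}_{A^0}\le-\bra{1+\cd-\cu-\deu(\norm{v_n}_{A^0})}\norm{v_n}_{A^4}-\bra{2(3\cd-\cu)-\ded(\norm{v_n}_{A^0})}\norm{v_n}_{A^2}.
\]
Passing from $G$ to the truncation $G_n$ only deletes nonnegative tail terms from these series, so the bound holds with the same constants uniformly in $n$.

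To close, I would run a continuation argument on $\norm{v_n}_{A^0}$. The maps $\deu,\ded$ are increasing and vanish at $0$, so on any interval where $\norm{v_n(t)}_{A^0}\le\norm{v_0}_{A^0}$ the two brackets above are bounded below by the strictly positive numbers $D_1,D_2$ of \eqref{smallness}; since $\norm{v_n}_{A^4}\ge\norm{v_n}_{A^2}\ge\norm{v_n}_{A^0}$ for mean-zero data (because $|k|\ge1$), this forces
\[
\frac{d}{dt}\norm{v_n}_{A^0}\le-(D_1+D_2)\norm{v_n}_{A^0}<0.
\]
Thus $\norm{v_n(t)}_{A^0}$ is nonincreasing, the bootstrap hypothesis $\norm{v_n(t)}_{A^0}\le\norm{v_0}_{A^0}$ can never be violated, and the interval extends to all of $[0,T]$; Grönwall then yields the decay \eqref{decun}. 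Integrating the previous inequality in time gives $D_1\int_0^T\norm{v_n}_{A^4}\,dt\le\norm{v_0}_{A^0}$, which is the $L^1(0,T;A^4)$ part of \eqref{unifbouwiener}, and reading $\pat v_n$ off \eqref{appr} and estimating its $A^0$-norm by $\norm{v_n}_{A^4}$ times bounded powers of $\norm{v_n}_{A^0}$ yields the $W^{1,1}(0,T;A^0)$ part, all uniformly in $n$.

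The step I expect to be the main obstacle is the absorption in the second paragraph: checking that the infinite family of nonlinear contributions, after interpolation, assembles into precisely the closed sums $\deu,\ded$, and that the smallness \eqref{smallness} makes the top-order coefficient $1+\cd-\cu-\deu$ strictly positive, so that the quartic terms are controlled by the $A^4$-dissipation instead of overwhelming it. The accompanying subtlety is that the dissipation coefficients depend on the running norm $\norm{v_n(t)}_{A^0}$, so decay and smallness must be proved simultaneously; the monotonicity of $\deu,\ded$ together with $\norm{v_n}_{A^4}\ge\norm{v_n}_{A^0}$ is exactly what makes this bootstrap self-consistent.
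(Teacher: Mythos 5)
Your proposal is correct and follows essentially the same route as the paper: the same Fourier-side differential inequality for $\frac{d}{dt}\norm{v_n(t)}_{A^0}$, the same interpolation collapses of the $\tn_i^r$ terms into the closed sums $\deu,\ded$, the same bootstrap/continuation argument under \eqref{smallness}, and the same Gronwall and time-integration steps for \eqref{decun} and \eqref{unifbouwiener}. Your variant of the continuation step (monotone decrease via $\norm{v_n}_{A^4}\ge\norm{v_n}_{A^2}\ge\norm{v_n}_{A^0}$ for mean-zero data) is if anything a slightly cleaner phrasing of the paper's argument at $t^*$, but it is not a different method.
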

\begin{proof}
We omit  the subscript $n$, so we write $v$ when referring to $v_n$.\\
We are going to work with the quasilinear version of the main equation in \eqref{appr}, i.e.
\begin{align}
\ds  \pat v&=-(1+\cd -\cu )v,_{iijj}+2 (3\cd-\cu)\,
v,_{jj}\nonumber\\
&\q-\sum_{r=1}^n (-1)^r\,
\pare{{\tn}_{0}^r-{\tn}_{3}^r}-
\sum_{r=0}^n (-1)^r\pare{{\tn}_{1}^r+{\tn}_{2}^r-{\tn}_{4}^r} ,\label{eqqq}
\end{align}
for
\begin{align*}
\tn_{0}^r&=
[-\cu +(r+1)\cd ]\,v^rv,_{iijj},
\\
\tn_{1}^r&=
[\cu (r+1)-\cd (r+2)(r+1)]\,v^r
\,(2v,_jv,_{iij}+v,_{jj}v,_{ii}),
\\
\tn_{2}^r&=
[-\cu (r+2)(r+1)+\cd (r+3)(r+2)(r+1)]
\,v^r
\,v,_jv,_jv,_{ii} ,\\
\tn_{3}^r&= [-\cu (r+2)(r+1)+\cd (r+3)(r+2)(r+1)]\,
v^r
v,_{jj},
\\
\tn_{4}^r&= [\cu (r+3)(r+2)(r+1)-\cd (r+4)(r+3)(r+2)(r+1)]\,
v^r
v,_jv,_j.
\end{align*}

\medskip

\noindent
{\it Step $1$: The Fourier series of \eqref{eqqq}.}\\
We begin computing the Fourier series of \eqref{eqqq}. Read in terms of the $k$--th Fourier coefficient, we have:
\begin{align*}
\ds  \pat\fv&=-(1+\cd -\cu )|k|^4\fv-2 (3\cd-\cu)|k|^2\fv\nonumber\\
&\q-\sum_{r=1}^n (-1)^r\,
\pare{\wtn_{0}^r-\wtn_{3}^r}-
\sum_{r=0}^n (-1)^r \pare{\wtn_{1}^r+\wtn_{2}^r-\wtn_{4}^r},
\end{align*}
for
\begin{align*}
\wtn_0^r(k)&=
[-\cu +(r+1)\cd ]
\sum_{\al^1\in \ZZ}\ldots\sum_{\al^{r}\in \ZZ}
\fv(\al^r)
\prod_{s=1}^{r-1}\fv(\al^s-\al^{s+1})\,|k-\al^1|^4\fv(k-\al^1),\\
%%%%%%%%%%%%%%%%%%%%%%%%%%%%%%%%%%%%
\wtn_1^r(k)&=
[\cu (r+1)-\cd (r+2)(r+1)]\sum_{\al^1\in \ZZ}\ldots\sum_{\al^{r+1}\in \ZZ}
\fv(\al^{r+1})\prod_{s=2}^{r}\fv(\al^s-\al^{s+1})\nonumber
\\
&\q\times\left[ 2\, (k_j-\al_j^1)\fv(k-\al^1)|\al^1-\al^2|^2(\al^1_j-\al^2_j)\fv(\al^1-\al^2)\right.\nonumber\\
&\qq\left.+|k-\al^1|^2\fv(k-\al^1)|\al^1-\al^2|^2\fv(\al^1-\al^2)\right],
\\
%%%%%%%%%%%%%%%%%%%%%%%%%%%%%%%%%%%%
\wtn_{2}^r(k)&=
[-\cu (r+2)(r+1)+\cd (r+3)(r+2)(r+1)] \sum_{\al^1\in \ZZ}\ldots\sum_{\al^{r+2}\in \ZZ}
\fv(\al^{r+2})\nonumber\\
&\q\times\prod_{s=3}^{r+1}\fv(\al^s-\al^{s+1})\,(k_j-\al^1_j)\fv(k-\al^1) (\al^1_j-\al^2_j)\fv(\al^1-\al^2) |\al^2-\al^3|^2\fv(\al^2-\al^3),\\
\\
%%%%%%%%%%%%%%%%%%%%%%%%%%%%%%%%%%%%
\wtn_3(k)&=
[-\cu (r+2)(r+1)+\cd (r+3)(r+2)(r+1)]\,\nonumber\\
&\q\times
\sum_{\al^1\in \ZZ}\ldots\sum_{\al^{r}\in \ZZ}
\fv(\al^r)
\prod_{s=1}^{r-1}\fv(\al^s-\al^{s+1})\,|k-\al^1|^2\fv(k-\al^1),\\
%%%%%%%%%%%%%%%%%%%%%%%%%%%%%%%%%%%%
\wtn_4(k)&=
[\cu (r+3)(r+2)(r+1)-\cd (r+4)(r+3)(r+2)(r+1)] \sum_{\al^1\in \ZZ}\ldots\sum_{\al^{r+1}\in \ZZ}
\fv(\al^{r+1})\nonumber
\\
&\q\times
\prod_{s=2}^{r}\fv(\al^s-\al^{s+1})(k_j-\al^1_j)\fv(k-\al^1)(\al^1_j-\al^2_j)\fv(\al^1-\al^2).
\end{align*}

\medskip

\noindent
{\it Step $2$: A priori estimates in $ L^1(0,T;A^4(\TT))\cap L^\infty(0,T;A^0(\TT))$ .}\\
Since
\[
\pat|\fv(t,k)|={Re\left( \overline{\fv}(t,k)\pat \fv(t,k) \right)}/{|\fv(t,k)|},
\]
  we  estimate the time derivative of the $A^0(\TT)$ semi-norm as
\begin{align}\label{disA0}
\ds \frac{d}{dt}\|v(t)\|_{A^0}&\le-(1+\cd -\cu )\|v(t)\|_{A^4}-2 (3\cd-\cu)\|v(t)\|_{A^2}
+\sum_{r\ge1}\pare{ \|\tn_{0}^r(t)\|_{A^0}+ \|\tn_{3}^r(t)\|_{A^0}}\nonumber\\
&\q+\sum_{r\ge0}
\pare{ \|\tn_{1}^r(t)\|_{A^0}+ \|\tn_{2}^r(t)\|_{A^0}+ \|\tn_{4}^r(t)\|_{A^0}}.
\end{align}
We make use of Tonelli's Theorem and the interpolation inequality \eqref{interpol}, getting
\begin{align*}
\|\tn_0^r(t)\|_{A^0}&\le [\cu +(r+1)\cd ]{\|v(t)\|_{A^0}^r} \|v(t)\|_{A^4}
,\\
%%%%%%%%%%%%%%%%%%%%%%%%%%%%%%%%%%%%
\|\tn_1^r(t)\|_{A^0}&\le
 [\cu (r+1)+\cd (r+2)(r+1)]{\|v(t)\|_{A^0}^r}\left[ 2 \|v(t)\|_{A^1}\|v(t)\|_{A^3}+  \|v(t)\|_{A^2}^2\right]
\\
&\le
3[\cu (r+1)+\cd (r+2)(r+1)]
{\|v(t)\|_{A^0}^{r+1}}  \|v(t)\|_{A^4}
,\\
%%%%%%%%%%%%%%%%%%%%%%%%%%%%%%%%%%%%
\|\tn_{2}^r(t)\|_{A^0}&\le
[\cu (r+2)(r+1)+\cd (r+3)(r+2)(r+1)]{\|v(t)\|_{A^0}^r}\|v(t)\|_{A^1}^2\|v(t)\|_{A^2}\\
&\le
[\cu (r+2)(r+1)+\cd (r+3)(r+2)(r+1)]{\|v(t)\|_{A^0}^{r+2}}\|v(t)\|_{A^4};
\\
 \|\tn_3^r(t)\|_{A^0}&\le [\cu (r+2)(r+1)+\cd (r+3)(r+2)(r+1)]\|v(t)\|_{A^0}^r\|v(t)\|_{A^2}, \\
%%%%%%%%%%%%%%%%%%%%%%%%%%%%%%%%%%%%
 \|\tn_4^r(t)\|_{A^0}&\le [\cu (r+3)(r+2)(r+1)+\cd (r+4)(r+3)(r+2)(r+1)]\|v(t)\|_{A^0}^r\|v(t)\|_{A^1}^2
\\
&\le [\cu (r+3)(r+2)(r+1)+\cd (r+4)(r+3)(r+2)(r+1)]\|v(t)\|_{A^0}^{r+1}\|v(t)\|_{A^2}
.
\end{align*}
We now deal with the power series in $r$. To this aim, we recall that, for any $0<w<1$, it holds that
\begin{align*}
\sum_{r\ge0} w^r&=\frac{1}{1-w},\\
\sum_{r\ge0}\prod_{j=1}^m (r+j)w^{r}&=\left(
\sum_{r\ge0} w^{r+m}
\right)^{(m)}=\left(\frac{w^m}{1-w}\right)^{(m)}=\frac{m!}{(1-w)^{m+1}},\\
\sum_{r\ge1}\prod_{j=1}^m (r+j)w^{r}&=\left(\frac{w^{m+1}}{1-w}\right)^{(m)}=m!\frac{1-(1-w)^{m+1}}{(1-w)^{m+1}}.
\end{align*}
We use the above computations with $w=\|v(t)\|_{A^0}$ in order to  bound the sums of $ \|\tn_i^r(t)\|_{A^0}$ as
\begin{align*}
\sum_{r\ge1} \|\tn_0^r(t)\|_{A^0}&\le\|v(t)\|_{A^0}
\left(
\frac{\cu }{1-\|v(t)\|_{A^0}}+
\frac{\cd \left(2-\|v(t)\|_{A^0}\right)}{(1-\|v(t)\|_{A^0})^2}
\right)
\|v(t)\|_{A^4}
,\\
%%%%%%%%%%%%%%%%%%%%%%%%%%%%%%%%%%%%
\sum_{r\ge0} \|\tn_1^r(t)\|_{A^0}&\le
3\|v(t)\|_{A^0}
\left(
\frac{\cu }{(1-\|v(t)\|_{A^0})^2}+
\frac{2\cd }{(1-\|v(t)\|_{A^0})^3}
\right)
 \|v(t)\|_{A^4},\\
\sum_{r\ge0} \|\tn_{2}^r(t)\|_{A^0}&\le
\|v(t)\|_{A^0}^2
\left(
\frac{2\cu }{(1-\|v(t)\|_{A^0})^3}+
\frac{3!\cd }{(1-\|v(t)\|_{A^0})^4}
\right)
   \|v(t)\|_{A^4};\\
\sum_{r\ge1} \|\tn_3^r(t)\|_{A^0}&\le  \norm{v(t)}_{A^0}
\left(2\cu
\frac{3-3\norm{v(t)}_{A^0}+\norm{v(t)}_{A^0}^2}{(1-\norm{v(t)}_{A^0})^3}\right.\\
&\qq\qq\qq\left.+
3!\cd \frac{4-6\norm{v(t)}_{A^0}+4\norm{v(t)}_{A^0}^2-\norm{v(t)}_{A^0}^3}{(1-\norm{v(t)}_{A^0})^4}
\right)
\norm{v(t)}_{A^2}
,\\
%%%%%%%%%%%%%%%%%%%%%%
\sum_{r\ge0} \|\tn_4^r(t)\|_{A^0}&\le  \|v(t)\|_{A^0}
\left(
\frac{3!\cu }{(1-\|v(t)\|_{A^0})^4}
+\frac{4!\cd }{(1-\|v(t)\|_{A^0})^5}
\right) \|v(t)\|_{A^2}.
\end{align*}
We gather the above estimates in two groups w.r.t. the $A^4(\TT)$ and the $A^2(\TT)$ semi-norms:
\begin{align*}
\sum_{r\ge1} \|\tn_0^r(t)\|_{A^0}+\sum_{\ell=1}^{2}\sum_{r\ge0} \|\tn_\ell^r(t)\|_{A^0}&\le
\|v(t)\|_{A^0}\left[
\frac{\cu }{1-\|v(t)\|_{A^0}}+
\frac{\cd (2-\|v(t)\|_{A^0})}{(1-\|v(t)\|_{A^0})^2}
\right. \\
&\q+
3
\left(
\frac{\cu }{(1-\|v(t)\|_{A^0})^2}+
\frac{2\cd }{(1-\|v(t)\|_{A^0})^3}
\right)
\\
&\q\left.+{\|v(t)\|_{A^0}}
\left(
\frac{2\cu }{(1-\|v(t)\|_{A^0})^3}+
\frac{3!\cd }{(1-\|v(t)\|_{A^0})^4}
\right)
 \right]  \|v(t)\|_{A^4}\\
&=\deu (\|v(t)\|_{A^0})\|v(t)\|_{A^4},\\
\sum_{r\ge1} \|\tn_3^r(t)\|_{A^0}+\sum_{r\ge0} \|\tn_4^r(t)\|_{A^0}
\le&
\|v(t)\|_{A^0}
\left(
2\cu \frac{3-3\norm{v(t)}_{A^0}+\norm{v(t)}_{A^0}^2}{(1-\norm{v(t)}_{A^0})^3}\right.\\
&\qq\left.+3!
\frac{\cd (4-6\norm{v(t)}_{A^0}+4\norm{v(t)}_{A^0}^2-\norm{v(t)}_{A^0}^3)+\cu }{(1-\norm{v(t)}_{A^0})^4}
\right.\\
&\qq\left.+\frac{4!\cd }{(1-\|v(t)\|_{A^0})^5}
\right)\norm{v(t)}_{A^2}
\\
&=\ded (\|v(t)\|_{A^0})\|v(t)\|_{A^2} ,
\end{align*}
where $\de_i (\|v(t)\|_{A^0})=\de_i (t)$ are
\begin{align*}
\deu (t)&=
\|v(t)\|_{A^0}
\pare{\frac{\cu }{1-\|v(t)\|_{A^0}}+
\frac{\cd (2-\|v(t)\|_{A^0})+3\cu
}{(1-\|v(t)\|_{A^0})^2}
+2\frac{3\cd
+\cu \|v(t)\|_{A^0}}{(1-\|v(t)\|_{A^0})^3}+\frac{3!\cd \|v(t)\|_{A^0}}{(1-\|v(t)\|_{A^0})^4}},\\
\ded (t)&=
\|v(t)\|_{A^0}
\left(
2\cu \frac{3-3\norm{v(t)}_{A^0}+\norm{v(t)}_{A^0}^2}{(1-\norm{v(t)}_{A^0})^3}\right.\\
&\qq\qq\qq\left.+3!
\frac{\cd (4-6\norm{v(t)}_{A^0}+4\norm{v(t)}_{A^0}^2-\norm{v(t)}_{A^0}^3)+\cu }{(1-\norm{v(t)}_{A^0})^4}
+\frac{4!\cd }{(1-\|v(t)\|_{A^0})^5}
\right).
\end{align*}
We come back to the main inequality \eqref{disA0} so that, thanks to the above computations, we obtain
\begin{align*}
 \frac{d}{dt}\|v(t)\|_{A^0}&\le-(1+\cd -\cu -\deu (t))\|v(t)\|_{A^4}-\pare{2 (3\cd-\cu)-\ded (t)}\|v(t)\|_{A^2}.
\end{align*}
We are going to prove that, thanks to  the smallness condition \eqref{smallness}, we also have  that
\begin{align*}
1+\cd -\cu -\deu (t)&>0,\\
2 (3\cd-\cu)-\ded (t)&>0,
\end{align*}
for small values of $t$. Indeed, since $\de_i(t)=\de_i(\|v(t)\|_{A^0} )$ are  increasing functions in $\|v(t)\|_{A^0}$, then
\[
\de_i(t)\le \de_i(0)=\de_i\q\forall t\le t^*,
\]
where
\[
t^*=\sup\left\{ \tau:\q  \|v(t)\|_{A^0}\le \|v_0\|_{A^0}\q\forall t\le \tau \right\}.
\]
This implies that, for $t\le t^*$,
\begin{align}
1+\cd -\cu -\deu (t)\ge1+\cd -\cu -\deu =D_1&>0,\label{wdeu}\\
2 (-\cu +3\cd )-\ded (t)\ge2 (-\cu +3\cd )-\ded =D_2&>0.\label{wded}
\end{align}
Then we have
\begin{equation}\label{disA4}
 \frac{d}{dt}\|v(t)\|_{A^0}+D_2\|v(t)\|_{A^2}+D_1\|v(t)\|_{A^4}\le 0\qq t\le t^*,
\end{equation}
so we deduce that $v\in L^1(0,t^*;A^4(\TT))\cap L^\infty(0,t^*;A^0(\TT))$.\\
We claim that this regularity holds for every time. If not, namely $t^*<\infty$, then we  would have that
\[
 \frac{d}{dt}\|v(t)\|_{A^0}\biggl|_{t=t^*}<0
\]
which leads to a contradiction.
\\
We conclude this step deducing that the  uniform boundedness
$v\in L^1(0,T;A^4(\TT))\cap L^\infty(0,T;A^0(\TT))$  holds up to $T\le \infty$.\\

\noindent
{\it Step $3$: The  regularity $ W^{1,1}(0,T;A^0(\TT))$.}\\
The continuity regularity follows from the inequality
\[
 \frac{d}{dt}\|v(t)\|_{A^0}\le c\pare{ D_2\|v(t)\|_{A^2}+ D_1\|v(t)\|_{A^4}}\qq  \forall t>0.
\]

\noindent
{\it Step $4$: The decay estimate.}
The inequality in \eqref{disA4} for $t\le \infty$,  the fact that $\norm{v}_{A^\al}\le \norm{v}_{A^\b}$ for $\alpha\le\b$ and Gronwall's inequality
provide us with the exponential decay
\begin{equation*}
\|v(t)\|_{A^0}\le \exp\pare{
-\pare{ D_1+ D_2}t
} \|v_0\|_{A^0}\qq  \forall t>0.
\end{equation*}

\end{proof}

\medskip

We collect the compactness results we need to prove the existence of solutions as \cref{defsol} in the following Proposition.

\medskip

\begin{proposition}[Compactness results]\label{compex}
Let the parameters $c_i$ verify \eqref{poscost}.
Assume that $v_0\in A^0(\TT)$ satisfies
\[
\norm{v_0}_{A^0}<1,
\]
and the smallness condition in \eqref{smallness} (see also \eqref{de1}, \eqref{de2}).  Then, there exists a function $v$ such that, up to subsequences,  every approximating sequence of solutions $\{v_n\}_n$ of \eqref{appr} verifies
\begin{align}
v_n\rightarrow v\q&\t{a.e.}\q (0,T)\times\TT,\label{ae}\\
v_n\overset{*}{\rightharpoonup} v\q& \t{in}\q L^\infty( (0,T)\times\TT)
,\label{Linf}\\
v_n\overset{*}{\rightharpoonup} v\q& \t{in}\q \mathcal{M}(0,T;W^{4,\infty}(\TT))
,\label{measw}\\
v_n\rightharpoonup v\q& \t{in}\q L^2(0,T;H^{2}(\TT))
.\label{LH}
\end{align}
\end{proposition}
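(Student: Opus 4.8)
The plan is to deduce all four convergences from the uniform bounds furnished by \cref{prioriw}, namely the boundedness of $\{v_n\}_n$ in $W^{1,1}(0,T;A^0(\TT))\cap L^1(0,T;A^4(\TT))$ recorded in \eqref{unifbouwiener}, together with the decay estimate \eqref{decun}. Two elementary embeddings will be used throughout: since $\norm{u}_{L^\infty}\le \norm{u}_{A^0}$ and $\norm{\pa^\beta u}_{L^\infty}\le \norm{u}_{A^4}$ for every multi-index with $|\beta|\le 4$, one has the continuous inclusions $A^0(\TT)\hookrightarrow L^\infty(\TT)$ and $A^4(\TT)\hookrightarrow W^{4,\infty}(\TT)$. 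I will also repeatedly use that each $v_n$ has zero spatial mean, so $\fv_n(0)=0$ and hence $\norm{v_n(t)}_{A^0}\le\norm{v_n(t)}_{A^2}\le\norm{v_n(t)}_{A^4}$.

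First I would establish the two weak-$*$ statements \eqref{Linf} and \eqref{measw}. The decay estimate \eqref{decun} gives $\norm{v_n}_{L^\infty((0,T)\times\TT)}\le \sup_t\norm{v_n(t)}_{A^0}\le\norm{v_0}_{A^0}<1$ uniformly in $n$, so the sequential Banach--Alaoglu theorem yields a (not relabelled) subsequence satisfying \eqref{Linf}. Likewise, the inclusion $A^4(\TT)\hookrightarrow W^{4,\infty}(\TT)$ together with the $L^1(0,T;A^4(\TT))$ bound produces a uniform bound for $\{v_n\}_n$ in $L^1(0,T;W^{4,\infty}(\TT))\hookrightarrow\mathcal{M}(0,T;W^{4,\infty}(\TT))$; viewing $\mathcal{M}(0,T;W^{4,\infty}(\TT))$ as a dual space and invoking weak-$*$ sequential compactness on bounded sets then gives \eqref{measw}.

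Next I would prove \eqref{LH} by upgrading the Wiener bounds to an $L^2(0,T;H^2(\TT))$ estimate. Because $\fv_n(0)=0$, the inequality $(1+|k|^2)^2\le 4|k|^4$ for $k\neq 0$ combined with $\ell^2\subset\ell^1$ yields $\norm{v_n(t)}_{H^2}\le 2\norm{v_n(t)}_{A^2}$, while the interpolation inequality \eqref{interpol} with $p=2$, $q=4$, $\theta=1/2$ gives $\norm{v_n(t)}_{A^2}^2\le \norm{v_n(t)}_{A^0}\norm{v_n(t)}_{A^4}$. Integrating in time and using \eqref{decun} to control $\norm{v_n(t)}_{A^0}\le\norm{v_0}_{A^0}$ together with the uniform $L^1(0,T;A^4(\TT))$ bound, I obtain
\[
\int_0^T\norm{v_n(t)}_{H^2}^2\,dt\le 4\norm{v_0}_{A^0}\int_0^T\norm{v_n(t)}_{A^4}\,dt\le C,
\]
uniformly in $n$. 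Since $L^2(0,T;H^2(\TT))$ is a Hilbert space, weak compactness yields \eqref{LH}.

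Finally, \eqref{ae} requires strong compactness, for which I would invoke an Aubin--Lions--Simon argument. The natural triple is $H^2(\TT)\hookrightarrow\hookrightarrow H^1(\TT)\hookrightarrow L^2(\TT)$, the first embedding being compact by Rellich. The sequence $\{v_n\}_n$ is bounded in $L^2(0,T;H^2(\TT))$ by the previous step, while $\{\pat v_n\}_n$ is bounded in $L^1(0,T;A^0(\TT))$ by \eqref{unifbouwiener}, hence in $L^1(0,T;L^2(\TT))$ through $A^0(\TT)\hookrightarrow L^2(\TT)$. The Aubin--Lions--Simon lemma then gives relative compactness of $\{v_n\}_n$ in $L^2(0,T;H^1(\TT))$, so a subsequence converges strongly in $L^2((0,T)\times\TT)$, and a further subsequence converges a.e., which is \eqref{ae}. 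A standard uniqueness-of-limits argument identifies the almost-everywhere, weak and weak-$*$ limits with a single function $v$. I expect the Aubin--Lions--Simon step to be the main technical point: the time-derivative bound is only $L^1$ in time rather than in a reflexive space, so one must appeal to the form of the compactness lemma valid for $L^1$ time-regularity of $\pat v_n$ instead of the classical reflexive formulation.
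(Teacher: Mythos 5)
Your proposal is correct, and for \eqref{Linf}, \eqref{measw} and \eqref{LH} it follows the paper's proof almost verbatim: Banach--Alaoglu applied to the uniform bounds \eqref{unifbouwiener} of \cref{prioriw}, and then interpolation between $A^0$ and $A^4$ (exactly the paper's estimate \eqref{finH2}, $\int_0^T\norm{v_n}_{H^2}^2\,dt\le c\norm{v_n}_{L^\infty(A^0)}\int_0^T\norm{v_n}_{A^4}\,dt$) to obtain weak compactness in $L^2(0,T;H^2(\TT))$. The genuine difference is in how you reach \eqref{ae}. The paper dismisses it in one line (``a consequence of the above ones''), and the actual strong-compactness work only appears later, in \cref{corregw}: there the equation \eqref{appr} itself is used to bound $\pat v_n$ in $L^2(0,T;H^{-2}(\TT))$ --- which requires the uniform-in-$n$ bounds on $G_n$ and $G_n'$ proved at that point --- and Simon's theorem \cite[Corollary 4]{S} then gives $v_n\to v$ in $L^2(0,T;L^2(\TT))$, hence a.e.\ along a subsequence. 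You instead extract the time-derivative information directly from the $W^{1,1}(0,T;A^0(\TT))$ half of \eqref{unifbouwiener}, embed $A^0(\TT)\hookrightarrow L^2(\TT)$, and invoke the $L^1$-in-time form of Aubin--Lions--Simon with the triple $H^2\hookrightarrow\hookrightarrow H^1\hookrightarrow L^2$. Both routes are valid; yours makes the proposition self-contained and never touches the nonlinearity again, while the paper's detour through the equation yields the quantitative bound \eqref{findt} on $\pat v_n$ in $H^{-2}$, which it reuses to prove the continuity $v\in C([0,T];L^2(\TT))$ in \cref{corregw} --- information your argument does not produce. Your correct observation that the classical reflexive formulation of Aubin--Lions does not apply and that one needs the version tolerating $\pat v_n$ merely bounded in $L^1(0,T;Y)$ is precisely the right technical point.

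One small caveat you should make explicit: you take the $W^{1,1}(0,T;A^0(\TT))$ bound at face value from the statement of \cref{prioriw}, but Step 3 of that proof as written only controls $\frac{d}{dt}\norm{v_n(t)}_{A^0}$, which is weaker than a bound on $\norm{\pat v_n(t)}_{A^0}$. The stronger bound does hold --- taking absolute values in the Fourier-side equation of Step 1 and repeating the Step 2 estimates gives $\norm{\pat v_n(t)}_{A^0}\le c\pare{\norm{v_n(t)}_{A^2}+\norm{v_n(t)}_{A^4}}\in L^1(0,T)$ uniformly in $n$ --- but since your compactness step leans on exactly this, a one-line verification of it belongs in your proof.
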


\begin{proof}
We take advantage of \eqref{unifbouwiener} and the Banach-Alaoglu Theorem to deduce \eqref{Linf}--\eqref{measw}.

Since $\|f\|_{W^{\al,\infty}}\le \|f\|_{A^\al}$ for every $\al\ge0$, the weak convergence \eqref{LH} directly follows interpolating between $0<2<4$ (see the interpolation inequality in \eqref{interpol}) and by the uniform bounds in \eqref{unifbouwiener}:
\begin{equation}\label{finH2}
\int_0^T \|v_n(t)\|_{H^2}^{2}\,dt\le c\int_0^T \|v_n(t)\|_{A^2}^{2}\,dt\le c\|v_n(t)\|_{L^\infty(A^0)}\int_0^T\|v_n(t)\|_{A^4}\,dt<\infty.
\end{equation}
The a.e. convergence \eqref{ae} is a consequence of the above ones.
\end{proof}

\medskip

The following convergence result follows from \cref{prioriw,compex}, and it will allow us to complete the proof of \cref{teoex}.

\medskip

\begin{corollary}[Further regularity results]\label{corregw}
Let the parameters $c_i$ verify \eqref{poscost}.
Assume that $v_0\in A^0(\TT)$ satisfies
\[
\norm{v_0}_{A^0}<1,
\]
and the smallness condition in \eqref{smallness} (see also \eqref{de1}, \eqref{de2}).
 Then, up to subsequences, we  have that every approximating sequence of solutions $\{v_n\}_n$ of \eqref{appr} verifies
\begin{align}
v_n\overset{*}{\rightharpoonup} v\q& \t{in}\q L^{\frac{4}{r}}(0,T;W^{r,\infty}(\TT))\q\text{for}\q r=1,2,3
,\label{LW1}\\
v_n\rightarrow v\q& \t{in}\q L^2(0,T;H^{r}(\TT)),\q 0\le r<2.
\label{LHr}
\end{align}
Furthermore,
\begin{equation}\label{ct}
v\in C([0,T];L^2(\TT)).
\end{equation}
\end{corollary}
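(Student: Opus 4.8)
The plan is to obtain \eqref{LW1}, \eqref{LHr} and \eqref{ct} by combining the uniform Wiener bounds of \cref{prioriw} with the convergences already recorded in \cref{compex}. Throughout, $v$ is the limit furnished by \cref{compex}; since it is the common distributional limit of the extracted subsequences, every weak-$*$ limit obtained below is automatically identified with $v$ by the a.e. convergence \eqref{ae}.

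For \eqref{LW1} I would interpolate the $A^r$ semi-norm between $A^0$ and $A^4$. Taking $\theta=r/4$ in \eqref{interpol} gives $\norm{v_n(t)}_{A^r}\le\norm{v_n(t)}_{A^0}^{1-r/4}\norm{v_n(t)}_{A^4}^{r/4}$, whence
\[
\int_0^T\norm{v_n(t)}_{A^r}^{4/r}\,dt\le\norm{v_n}_{L^\infty(0,T;A^0)}^{\frac{4}{r}-1}\int_0^T\norm{v_n(t)}_{A^4}\,dt,
\]
which is uniformly bounded for $r=1,2,3$ by \eqref{unifbouwiener}. Since $\norm{f}_{W^{r,\infty}}\le\norm{f}_{A^r}$ and $A^r(\TT)$ is a dual space (a weighted $\ell^1$ space of Fourier coefficients) with time exponent $4/r\in(1,\infty)$, Banach--Alaoglu yields a weak-$*$ convergent subsequence in $L^{4/r}(0,T;A^r(\TT))$; composing with the continuous embedding into $W^{r,\infty}(\TT)$ and invoking \eqref{ae} gives \eqref{LW1}.

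For the strong convergence \eqref{LHr} I would first upgrade \eqref{ae} to $L^2$: the uniform bound \eqref{Linf} in $L^\infty((0,T)\times\TT)$ together with dominated convergence gives $v_n\to v$ in $L^2(0,T;L^2(\TT))$. Interpolating $\norm{v_n-v}_{H^r}\le\norm{v_n-v}_{L^2}^{1-r/2}\norm{v_n-v}_{H^2}^{r/2}$ and applying H\"older in time with exponents $\frac{2}{2-r}$ and $\frac{2}{r}$ yields
\[
\int_0^T\norm{v_n-v}_{H^r}^2\,dt\le\pare{\int_0^T\norm{v_n-v}_{L^2}^2\,dt}^{\frac{2-r}{2}}\pare{\int_0^T\norm{v_n-v}_{H^2}^2\,dt}^{\frac{r}{2}}.
\]
The first factor tends to $0$, while the second stays bounded since $v_n$ and $v$ are bounded in $L^2(0,T;H^2(\TT))$ by \eqref{finH2} and \eqref{LH}; this proves \eqref{LHr} for all $0\le r<2$.

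Finally, for \eqref{ct} I would control $\pat v$ in a negative Sobolev norm rather than through the $A^0$ bound on $\pat v_n$. The decay \eqref{decun} keeps $\norm{v_n(t)}_{L^\infty}\le\norm{v_n(t)}_{A^0}\le\norm{v_0}_{A^0}<1$, so the partial sums $G_n(v_n)$, $G_n'(v_n)$ remain uniformly bounded in $L^\infty((0,T)\times\TT)$; writing $\pat v_n=-\D^2v_n-\D\pare{G_n(v_n)\D v_n-G_n'(v_n)}$ and using $v_n\in L^2(0,T;H^2(\TT))$ from \eqref{finH2}, each term is uniformly bounded in $L^2(0,T;H^{-2}(\TT))$, so $\pat v\in L^2(0,T;H^{-2}(\TT))$ in the limit. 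With $v\in L^2(0,T;H^2(\TT))$ and $L^2(\TT)=[H^2(\TT),H^{-2}(\TT)]_{1/2}$, the Lions--Magenes embedding $L^2(0,T;H^2)\cap H^1(0,T;H^{-2})\hookrightarrow C([0,T];L^2)$ gives \eqref{ct}. The main obstacle is precisely this last point: the a priori estimate only bounds $\pat v_n$ in the nonreflexive space $L^1(0,T;A^0(\TT))$, whose limit is merely a measure (compare \eqref{measw}) and too weak for continuity; the smallness $\norm{v_0}_{A^0}<1$, by keeping the nonlinear coefficients uniformly bounded, is what lets one re-estimate $\pat v_n$ in the reflexive space $L^2(0,T;H^{-2}(\TT))$ and conclude.
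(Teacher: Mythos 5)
Your proposal is correct, and for two of the three claims it coincides with the paper's own proof: \eqref{LW1} is obtained by exactly the same interpolation ($\theta=r/4$ in \eqref{interpol}) together with \eqref{unifbouwiener} and the embedding $\norm{f}_{W^{r,\infty}}\le\norm{f}_{A^r}$, and \eqref{ct} is obtained, as in the paper, by bounding $\pat v_n$ in $L^2(0,T;H^{-2}(\TT))$ through the duality estimate with the uniform-in-$n$ bounds on $G_n(v_n)$, $G_n'(v_n)$ (which the paper also derives from $\norm{v_n(t)}_{A^0}\le\norm{v_0}_{A^0}<1$ and summing the geometric series) --- this is precisely \eqref{findt} combined with \eqref{finH2}. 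The one genuine divergence is the strong convergence \eqref{LHr}: the paper first proves the time-derivative bound \eqref{dt} and then invokes Simon's compactness theorem \cite[Corollary 4]{S} together with the weak convergence \eqref{LH} to obtain $v_n\to v$ in $L^2(0,T;L^2(\TT))$ (their \eqref{LL}), whereas you get this strong convergence from the a.e. convergence \eqref{ae}, the uniform bound $\norm{v_n(t)}_{L^\infty}\le\norm{v_n(t)}_{A^0}\le\norm{v_0}_{A^0}<1$, and dominated convergence on the finite measure space $(0,T)\times\TT$; the ensuing $H^r$ interpolation with H\"older in time is the same computation in both. Your shortcut is more elementary, but note where it places its weight: \eqref{ae} is asserted in \cref{compex} merely as ``a consequence'' of the weak-$*$ convergences there, which is not by itself a rigorous derivation --- the standard justification of a.e. convergence (up to a subsequence) is exactly the strong $L^2(L^2)$ compactness that the paper establishes here via \eqref{dt} and Aubin--Lions--Simon. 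So the paper's route is the self-contained one (it effectively supplies the missing justification for \eqref{ae}), while yours is legitimate only as a citation of the stated proposition and would be circular if \eqref{ae} had to be proved from scratch. Since you establish the $H^{-2}$ bound on $\pat v_n$ anyway for \eqref{ct}, you could repair this at no extra cost by deducing \eqref{LL} from compactness, as the paper does, and then recovering \eqref{ae} from it.
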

\begin{proof}
The interpolation inequality \eqref{interpol} applied with $0<r<4$ gives us
\[
\int_0^T \|v_n(t)\|_{A^r}^{\frac{4}{r}}\,dt\le \|v_n(t)\|_{L^\infty(A^0)}^{\frac{4}{r}-1}\int_0^T\|v_n(t)\|_{A^4}\,dt<\infty
\]
thanks, again, to \eqref{unifbouwiener}. Since $\|f\|_{W^{\al,\infty}}\le \|f\|_{A^\al}$ for every $\al\ge0$, we recover the $*$-weak convergence in \eqref{LW1}.

We now focus on \eqref{LHr}. To obtain this convergence, we claim that we only need to prove that
\begin{equation}\label{dt}
\pat v_n\rightharpoonup \pat v\q \t{in}\q L^2(0,T;H^{-2}(\TT)).
\end{equation}
Indeed, recalling \eqref{LH} and invoking classical compactness results (see, for instance, \cite[Corollary $4$]{S}), then we get
\begin{equation}\label{LL}
v_n\rightarrow v\q \t{in}\q L^2(0,T;L^{2}(\TT)),
\end{equation}
and this convergence leads to \eqref{LHr}, since  interpolation inequalities and \eqref{LH} imply
\[
\|v_n-v\|_{L^2(H^r)}^2\le \|v_n-v\|_{L^2(H^2)}^r \|v_n-v\|_{L^2(L^2)}^{2-r}<c\|v_n-v\|_{L^2(L^2)}^{2-r}.
\]
Then, we are left with the proof of \eqref{dt}. Thanks to the Riesz Theorem, we know that
\[
\|\pat v_n(t)\|_{H^{-2}}=\underset{\footnotesize
\begin{split}
&\vp\in H^2(\TT)\\
&\|\vp\|_{H^2}\le 1
\end{split}
}{\sup}\left|
\phantom{x}_{H^{-2}}\lll \pat v_n(t), \vp\rrr_{H^2}
\right|.
\]
Problem \eqref{appr} implies
\begin{align*}
 \left|
\int_\TT \pat v_n(t) \vp\,dx
\right|
&\le\left|
\int_\TT
\left(1 +G_n(v_n(t))\right)\D v_n(t)\D \vp\,dx\right|+ \left|\int_\TT G_n'(v_n(t))\D\vp\,dx
\right|\\
&\le  \left(1+\max_{v_n} |G_n(v_n)| \right) \|v_n(t)\|_{H^2}\|\vp\|_{H^2}+  \max_{v_n} |G_n'(v_n)|\|\vp\|_{H^2}\\
&\le \max\set{ 1+\max_{v_n} |G_n(v_n)|, \max_{v_n} |G_n'(v_n)| }\left(
1+ \|v_n(t)\|_{H^2}
\right)\|\vp\|_{H^2}.
\end{align*}
We claim that $G_n(v)$ and $G_n'(v)$ are bounded uniformly in $n$. We briefly detail the proof of the boundedness of $G_n(v)$, being the one of $G_n'(v)$ analogous. By definition of $G_n(v)$ in \eqref{eq:Gn}, we have that
\begin{align*}
\av{G_n(v)}
&\le c_1\sum_{r=0}^n\av{v}^{r}+c_2\sum_{r=0}^n(r+1)\av{v}^{r} \le \frac{c_1}{1-|v|}+c_2\pare{|v|\sum_{r=0}^n\av{v}^{r}}'. 
\end{align*}
We now recall that we have assumed $\norm{v_0}_{A^0}<1$ and also that, by \cref{prioriw} (Step 4), we have  $\norm{v(t)}_{A^0}\le\norm{v_0}_{A^0}<1$. Then,  we are allowed to bound $\av{G_n(v)}$ as
\[
\av{G_n(v)}\le \frac{c_1}{1-\norm{v_0}_{A^0}}+\frac{c_2}{(1-\norm{v_0}_{A^0})^2}<\infty.
\]
Reasoning in the same way, we deduce the boundedness of $\av{G_n'(v)}$.\\
We thus get that
\begin{equation}\label{findt}
	\|\pat v_n(t)\|_{H^{-2}}^2\le c\left(
1+ \|v_n(t)\|_{H^2}
\right)^2
\end{equation}
and the desired bound follow by \eqref{LH}.

Finally, the continuity regularity \eqref{ct} is due to \eqref{finH2}  and \eqref{findt}.
\end{proof}

\medskip

We are now ready to prove \cref{teoex}.

\begin{proof}[Proof of \cref{teoex}]
Let $\vp\in W^{1,1}(0,T;L^1(\TT))\cap L^1(0,T;W^{4,1}(\TT))\cap L^2(0,T;H^2(\TT))$. We begin proving that the limit in $n$ of
\[
- \int_\TT v_0\vp(0)\,dx+\iint_{(0,T)\times\TT}- v_n\pat\vp+  v_n\D^2\vp+\left(  G(v_n)\D v_n-  G'(v_n) \right)\D\vp\,dx\,dt=0
\]
gives us the weak formulation in \cref{defsol}. To this aim, we use the boundedness and convergence results obtained in \cref{prioriw,compex}. We have that the convergence
\[
\iint_{(0,T)\times\TT}- v_n\pat\vp+ v_n\D^2\vp\,dx\,dt
\to
\iint_{(0,T)\times\TT}- u\pat\vp+ u\D^2\vp\,dx\,dt
\]
holds thanks to \eqref{Linf} and since $\pat\vp,\,\D^2\vp\in L^1((0,T)\times\TT)$. Furthermore, since $G ,\,G' $ are continuous functions and $v_n\rightarrow v$ a.e., then
\[
\iint_{(0,T)\times\TT}\left(  G(v_n)\D v_n-  G'(v_n) \right)\D\vp\,dx\,dt
\to
\iint_{(0,T)\times\TT}\left(  G(v)\D u-  G'(v) \right)\D\vp\,dx\,dt
\]
being $v_n$ uniformly bounded in $L^2(0,T;H^2(\TT))$ and $\D\vp\in L^2((0,T)\times\TT)$.\\

The regularities in \eqref{ex1}, \eqref{ex3} and \eqref{ex2} follows combining \cref{compex,corregw}.\\

The exponential decay in \eqref{dec1A0} is due to the decay in \eqref{decun} and the weakly-*lower semicontinuity of the norm, which implies
\[
\|v(t)\|_{L^\infty}\le\lim_{n\to\infty}\|v_n(t)\|_{A^0}\le \exp\pare{
-\pare{ D_1+ D_2}t
} \|v_0\|_{A^0}.
\]
\end{proof}

\section{Regularity results with $v_0\in A^0(\TT)\cap H^2(\TT)$}

We first prove some inequalities we will use during the incoming section.
\begin{lemma}
We have that the following estimates hold:
\begin{align}
\|\N w\|_{L^2}^2&\le c\|w\|_{H^2}\|w\|_{A^0}
,\label{l2}\\
\|\N w\|_{L^4}^2&\le c\|w\|_{H^2}\|w\|_{A^0},\label{l4}\\
\|\N w\|_{L^4}^2&\le c\|w\|_{L^2}\|w\|_{A^2}.\label{l42}
\end{align}
\end{lemma}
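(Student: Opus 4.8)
The plan is to derive all three bounds on the torus by integration by parts, combined with two elementary facts that are already available in the excerpt: the Fourier-side embedding $\|w\|_{L^2}\le c\|w\|_{A^0}$ (Parseval together with $\ell^2\subset\ell^1$, since $\sum_k|\widehat w(k)|^2\le(\sum_k|\widehat w(k)|)^2$), and the inequality $\|D^\alpha w\|_{L^\infty}\le\|w\|_{A^\alpha}$ (the bound $\|f\|_{W^{\alpha,\infty}}\le\|f\|_{A^\alpha}$ used earlier, applied with $\alpha=0$ and $\alpha=2$). For \eqref{l2} I would simply integrate by parts: since there is no boundary contribution on $\TT$,
\[
\|\N w\|_{L^2}^2=\int_\TT|\N w|^2\,dx=-\int_\TT w\,\D w\,dx\le\|w\|_{L^2}\|\D w\|_{L^2}\le c\|w\|_{A^0}\|w\|_{H^2},
\]
using $\|w\|_{L^2}\le c\|w\|_{A^0}$ and $\|\D w\|_{L^2}\le\|w\|_{H^2}$.

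For \eqref{l4} and \eqref{l42} I would treat the quartic quantity $\int_\TT|\N w|^4\,dx$ with a single integration by parts and then split the resulting bound in two different ways. Writing $|\N w|^2=\partial_k w\,\partial_k w$ and moving one derivative off the last factor, one obtains
\[
\int_\TT|\N w|^4\,dx=-2\int_\TT \partial_kw\,\partial_{jk}w\,\partial_jw\,w\,dx-\int_\TT|\N w|^2\,\D w\,w\,dx,
\]
and hence, estimating the Hessian factors pointwise, $\int_\TT|\N w|^4\,dx\le c\int_\TT|D^2w|\,|\N w|^2\,|w|\,dx$. A single Cauchy--Schwarz inequality (with $|\N w|^2\in L^2$) now yields the two desired estimates, depending on where the $L^\infty$ norm is placed: bounding $|w|\le\|w\|_{L^\infty}$ gives $\int|\N w|^4\le c\|w\|_{L^\infty}\|D^2w\|_{L^2}\|\N w\|_{L^4}^2$, while bounding $|D^2w|\le\|D^2w\|_{L^\infty}$ gives $\int|\N w|^4\le c\|D^2w\|_{L^\infty}\|w\|_{L^2}\|\N w\|_{L^4}^2$. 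Dividing by $\|\N w\|_{L^4}^2$ and using $\|w\|_{L^\infty}\le\|w\|_{A^0}$ with $\|D^2w\|_{L^2}\le\|w\|_{H^2}$ in the first case, and $\|D^2w\|_{L^\infty}\le\|w\|_{A^2}$ in the second, produces exactly \eqref{l4} and \eqref{l42}.

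The only genuine obstacle is the rigor of the quartic integration by parts, since $\N w\in L^4$ is precisely the quantity being estimated, so a priori the manipulations are merely formal. I would resolve this by first establishing the identity and the estimates for smooth $w$, for which every integral converges, and then passing to the limit by density of $C^\infty(\TT)$ in $H^2(\TT)$; the right-hand sides are continuous in the relevant $H^2\cap A^2$ (resp. $H^2\cap A^0$) topology, so the inequalities extend to the class of functions needed in the next section. I would also stress that the argument is completely dimension-free, which matters since it must hold on $\TT=[-\pi,\pi]^N$ for every $N\ge 1$. As a remark, \eqref{l2} also admits a one-line Fourier proof, namely $\|\N w\|_{L^2}^2=c\sum_k|k|^2|\widehat w(k)|^2\le c\,\|w\|_{\dot H^2}\sum_k|\widehat w(k)|$ obtained from $|k|^2|\widehat w(k)|\le\|w\|_{\dot H^2}$, but I would keep the integration-by-parts version for uniformity with the other two.
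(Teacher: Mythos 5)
Your proposal is correct and takes essentially the same route as the paper: a single integration by parts in each case, and for the quartic term the identity $\|\N w\|_{L^4}^4=-\int_{\TT}\pare{2w,_{ij}w,_iw,_j+w,_iw,_iw,_{jj}}w\,dx$ followed by the same two H\"older splittings, placing the $L^\infty$ factor on $w$ (bounded by $\|w\|_{A^0}$) for \eqref{l4} and on the Hessian (bounded by $\|w\|_{A^2}$) for \eqref{l42}. The minor deviations---Cauchy--Schwarz giving $\|w\|_{L^2}\|\D w\|_{L^2}$ in \eqref{l2} where the paper uses $\|w\|_{L^\infty}\|w\|_{H^2}$, and your density/mollification remark justifying the formal manipulations---are refinements of the same argument, not a different one.
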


\begin{proof}
We recall that  $\|f\|_{W^{\al,\infty}}\le \|f\|_{A^\al}$ for $\al\ge 0$, and proceed with the proofs integrating by parts.

We begin with \eqref{l2}:
\begin{align*}
\|\N w\|_{L^2}^2=\int_{\TT}  w,_jw,_j\,dx
&=-\int_{\TT} w,_{jj} w\,dx\le c\|w\|_{L^\infty}\|w\|_{H^2}\le
c\|w\|_{A^0}\|w\|_{H^2}.
\end{align*}

We now prove both the estimates  \eqref{l4}-\eqref{l42} regarding the $W^{1,4}(\TT)$ norm of $w$. Again, we integrate by parts obtaining
\begin{align*}
\|\N w\|_{L^4}^4&=\int_{\TT} w,_iw,_i w,_jw,_j\,dx =-\int_{\TT} (w,_iw,_i w,_j),_jw\,dx =-\int_{\TT} \pare{2w,_{ij}w,_iw,_j +w,_iw,_iw,_{jj}}w\,dx.
\end{align*}
Then, thanks also to H\"older's inequality, we estimate either as
\begin{align*}
&-\int_{\TT} \pare{2w,_{ij}w,_iw,_j +w,_iw,_iw,_{jj}}w\,dx \le c\|w\|_{A^0}\|w\|_{H^2}\|\N w\|_{L^4}^2,
\end{align*}
or
\begin{align*}
&-\int_{\TT} 2w,_{ij}\pare{w,_iw,_j +w,_iw,_iw,_{jj}}w\,dx\le
 c\|w\|_{L^2}\|w\|_{A^2}\|\N w\|_{L^4}^2,
\end{align*}
and the proof is concluded.
\end{proof}

\medskip

\begin{proposition}[A priori estimates in Sobolev spaces]\label{propreg}
Let the parameters $c_i$ verify \eqref{poscost}.
Assume that $v_0\in A^0(\TT)\cap H^2(\TT)$ satisfies
\[
\norm{v_0}_{A^0}<1,
\]
and the smallness condition in \eqref{smallness} (see also \eqref{de1}, \eqref{de2}).
Then, every approximating sequence of solutions $\{v_n\}_n$ of \eqref{appr} is uniformly bounded in
\[
\{v_n\}_n\in L^\infty(0,T;H^2(\TT))\cap L^2(0,T;{H}^4(\TT)).
\]
\end{proposition}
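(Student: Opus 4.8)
The plan is to run an $H^2$ energy estimate on the approximate solutions and to absorb the top-order nonlinear contribution into the fourth-order dissipation; the estimate must be uniform in $n$. As in \cref{prioriw} I drop the subscript and write $v$ for $v_n$. Since $v$ has zero mean, $\norm{v}_{H^2}$ is controlled by $\norm{\D v}_{L^2}$ together with the lower-order part already bounded in \cref{prioriw}, so it suffices to estimate $\norm{\D v}_{L^2}$. Multiplying the equation in \eqref{appr} by $\D^2 v$ and integrating by parts over $\TT$ gives
\begin{equation*}
\frac{1}{2}\frac{d}{dt}\norm{\D v}_{L^2}^2
=-\norm{\D^2 v}_{L^2}^2-\int_\TT \D^2 v\,\D\pare{G_n(v)\D v-G_n'(v)}\,dx .
\end{equation*}
Expanding $\D\pare{G_n(v)\D v}$ and $\D\pare{G_n'(v)}$ by the Leibniz and chain rules, every resulting term carries at most one factor $\D^2 v$ (the highest-order one) multiplied by derivatives of $v$ of order at most three, with coefficients among $G_n(v),G_n'(v),G_n''(v),G_n'''(v)$; the only term which is genuinely fourth order times fourth order is $-\int_\TT G_n(v)(\D^2 v)^2\,dx$.

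Collecting this contribution with the dissipative term, the principal part of the estimate is
\begin{equation*}
-\int_\TT \pare{1+G_n(v)}(\D^2 v)^2\,dx ,
\end{equation*}
and here coercivity is decisive. Writing $G(v)=-\cu s+\cd s^2$ with $s=(1+v)^{-1}>0$, one has $1+G(v)\ge 1-\cu^2/(4\cd)$, which is strictly positive exactly when $\cd>\cu^2/4$: this is the additional requirement isolated in \eqref{poscost-bis} and announced in the remark preceding \cref{teoreg}, so under it there is $c_0>0$ with $1+G(v)\ge 2c_0$ on $\set{v>-1}$. Since $\norm{v(t)}_{A^0}\le\norm{v_0}_{A^0}<1$ by \cref{prioriw}, we have $\norm{v(t)}_{L^\infty}<1$, and because $G_n\to G$ uniformly on $[-\norm{v_0}_{A^0},\norm{v_0}_{A^0}]$ the bound $1+G_n(v)\ge c_0$ holds uniformly in $n$ (for $n$ large). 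Hence the principal part is bounded above by $-c_0\norm{\D^2 v}_{L^2}^2$, which is the good dissipation.

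It then remains to absorb the lower-order terms. Using $\norm{f}_{W^{\al,\infty}}\le\norm{f}_{A^\al}$ to control $\N v$ and $\D v$ in $L^\infty$ (which keeps the argument free of dimension-dependent Sobolev embeddings), together with the Gagliardo--Nirenberg inequalities \eqref{l2}--\eqref{l42}, the Sobolev interpolation $\norm{v}_{\dot{H}^{3}}\le c\norm{\D v}_{L^2}^{1/2}\norm{\D^2 v}_{L^2}^{1/2}$, the Wiener interpolation \eqref{interpol}, and the uniform bounds on $G_n^{(k)}(v)$ (obtained as in the proof of \cref{corregw}), each term is bounded by a product of $\norm{\D^2 v}_{L^2}$ with lower-order quantities. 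A representative borderline term, $\int_\TT \D^2 v\,G_n'(v)\N v\cdot\N\D v\,dx$, is estimated by $c\norm{v}_{A^1}\norm{\D v}_{L^2}^{1/2}\norm{\D^2 v}_{L^2}^{3/2}$, and Young's inequality splits off $\tfrac{c_0}{2}\norm{\D^2 v}_{L^2}^2$ while leaving $c\norm{v}_{A^1}^4\norm{\D v}_{L^2}^2\le c\norm{v}_{A^0}^3\norm{v}_{A^4}\norm{\D v}_{L^2}^2$; the remaining quadratic and cubic terms are treated identically, the smallness of $\norm{v}_{A^0}$ helping the absorption. One is thus led to
\begin{equation*}
\frac{d}{dt}\norm{\D v}_{L^2}^2+c_0\norm{\D^2 v}_{L^2}^2\le c\,m(t)\,\norm{\D v}_{L^2}^2,\qquad m(t)=1+\norm{v(t)}_{A^4}\in L^1(0,T),
\end{equation*}
the integrability of the weight being furnished by the uniform bound $v\in L^1(0,T;A^4(\TT))$ of \cref{prioriw}. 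Gronwall's lemma together with $v_0\in H^2(\TT)$ yields the uniform $L^\infty(0,T;H^2(\TT))$ bound, and integrating the dissipation then gives the uniform $L^2(0,T;H^4(\TT))$ bound, i.e. \eqref{H2H4}. The main obstacle is precisely the coercivity step: unlike in \cref{prioriw}, smallness of $\norm{v_0}_{A^0}$ alone cannot tame $-\int_\TT G_n(v)(\D^2 v)^2\,dx$, since $G_n(0)=\cd-\cu\ne0$; the sign condition $\cd>\cu^2/4$ is genuinely needed, and this is what distinguishes \eqref{poscost-bis} from \eqref{poscost}.
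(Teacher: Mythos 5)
Your proof is correct, and its skeleton coincides with the paper's: multiply \eqref{appr} by $\D^2 v$, isolate the principal term $-\int_\TT(1+G_n(v))|\D^2v|^2\,dx$, obtain coercivity from $\cd>\cu^2/4$ exactly as you do (the paper too computes the minimum of $G$, notes that $1+\min G=1-\cu^2/(4\cd)$, and that this is why \cref{teoreg} carries \eqref{poscost-bis} even though the proposition's hypothesis nominally cites \eqref{poscost} --- your flagging of this is on point), pass to the limit $G_n\to G$ for the uniformity in $n$, and close with Gronwall using the weight $\norm{v(t)}_{A^4}\in L^1(0,T)$ furnished by \cref{prioriw}. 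Where you genuinely diverge is the treatment of the dangerous third-times-fourth-order term $I_2=-2\int_\TT G_n'(v)\,\N v\cdot\N\D v\,\D^2v\,dx$: you keep the dissipation and absorb $I_2$ via the interpolation $\norm{v}_{\dot H^3}\le c\norm{\D v}_{L^2}^{1/2}\norm{\D^2 v}_{L^2}^{3/2\cdot(1/3)\cdot 1}$ (i.e.\ $\dot H^3$ between $\dot H^2$ and $\dot H^4$) and Young's inequality, whereas the paper never uses the dissipation for the $L^\infty H^2$ bound at all: after establishing $1+G_n(v)>0$ it simply \emph{drops} the coercive term (inequality \eqref{abc}) and instead performs a chain of four integrations by parts that rewrites $I_2$ as a sum of integrals involving only derivatives of order at most three, which are then estimated by \eqref{l4}, \eqref{l42} and $\norm{f}_{W^{\al,\infty}}\le\norm{f}_{A^\al}$, yielding $I_2\le c(1+\norm{v}_{A^0})\norm{v}_{H^2}^2\norm{v}_{A^4}$ with no Young splitting. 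Your route is shorter and more standard; the paper's buys the self-contained differential inequality $\frac{d}{dt}\norm{v}_{H^2}^2\le c(1+\norm{v}_{A^0})\norm{v}_{A^4}\pare{1+\norm{v}_{H^2}^2}$, which preserves the full dissipation untouched for the subsequent $L^2(0,T;H^4)$ step obtained by integrating \eqref{star} in time. One small correction to your final display: terms such as $\int_\TT G_n''(v)\D v\,\D^2v\,dx$ coming from $\D(G_n'(v))$ leave, after Young, contributions not multiplied by $\norm{\D v}_{L^2}^2$, so the right-hand side should read $c\,m(t)\pare{1+\norm{\D v}_{L^2}^2}$ rather than $c\,m(t)\norm{\D v}_{L^2}^2$ --- harmless for Gronwall, and indeed the paper's own inequality carries the factor $\pare{1+\norm{v}_{H^2}^2}$ for the same reason.
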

\begin{proof}
Again, we omit the $n$--subscript and the dependence of $v$ on $(t,\,x)$.

We multiply the equation in \eqref{appr} by $\D^2v$ and integrate in space, getting
\begin{align}\label{ddtD2}
\frac{1}{2}\frac{d}{dt}\int_{\TT}|\D v|^2\,dx&=-\int_{\TT}|\D^2 v|^2\,dx-\int_{\TT}\D\left(G_n(v)\D v-  G_n'(v)\right)\D^2v\,dx.
\end{align}
Using the Einstein notation, we split the last integral in \eqref{ddtD2} as
\begin{align*}
-\int_{\TT}\left(G_n(v) v,_{ii}\right),_{\ell\ell}v,_{jjkk}\,dx&=-\int_{\TT}G_n(v)v,_{ii\ell\ell}v,_{jjkk}\,dx\\
&\q
-\int_{\TT}\left(
G_n''(v)v,_{\ell}v,_{\ell}v,_{ii}+G_n'(v)v,_{\ell\ell}v,_{ii}\right)v,_{jjkk}\,dx\\
&\q
-2\int_{\TT}G_n'(v)v,_{\ell}v,_{ii\ell}v,_{jjkk}\,dx
,\\
\int_{\TT}(G_n'(v)),_{\ell\ell}v,_{jjkk}\,dx&=\int_{\TT}(
G_n'''(u)v,_{\ell}v,_{\ell}+G_n''(v)v,_{\ell\ell}
)v,_{jjkk}\,dx.
\end{align*}
Then,  setting
\begin{align*}
I_1&=-\int_{\TT}
\left(
G_n''(v)v,_{\ell}v,_{\ell}v,_{ii}+G_n'(v)v,_{\ell\ell}v,_{ii}
\right)v,_{jjkk}\,dx
,\\
I_2&=-2\int_{\TT}
G_n'(v)v,_{\ell}v,_{ii\ell}v,_{jjkk}\,dx
,\\
I_3&= \int_{\TT}(
G_n'''(v)v,_{\ell}v,_{\ell}+G_n''(v)v,_{\ell\ell}
)v,_{jjkk}\,dx,
\end{align*}
we rearrange  \eqref{ddtD2} as
\begin{align}
\frac{1}{2}\frac{d}{dt}\int_{\TT}|v,_{jj}|^2\,dx
=-\int_{\TT}(1+G_n(v))|v,_{jjkk}|^2\,dx+I_1+I_2+I_3.\label{star}
\end{align}
We now focus on the first integral in the r.h.s..  \\
We claim that, for $n$ large, we have
\[
1+ G_n(v)>0.
\]
Since $G_n(v)\to G(v)$ for $n\to \infty$, it is sufficient to check whether $1+G(v)>0$.
Computing $G'(v)$, we find that $G(v)$ attains a global minimum in $v=2c_2/c_1-1$, and this leads to
\[
1+\min_v G(v)=1-\frac{\cu^2}{4\cd}>0,\q\t{i.e.}\q c_2> \frac{\cu^2}{4}
\]
which, jointly with \eqref{poscost}, gives assumption \eqref{poscost-bis}.
\\
Then, thanks to the previous remarks, we have
\begin{align}
\frac{1}{2}\frac{d}{dt}\int_{\TT}|v,_{jj}|^2\,dx
&\le \av{I_1}+I_2+\av{I_3}.\label{abc}
\end{align}
In the following computations, we need $G_n(v)$, and its derivatives as well,  to be bounded. We omit this boundedness result and we refer to the proof of \cref{corregw} for more details.
\\

We begin estimating the $I_1$ term using the $L^4$-gradient estimate in \eqref{l4} with $w=v$, getting
\begin{align}
\av{I_1}&\le c\pare{\int_{\TT} |v,_{\ell}||v,_{\ell}||v,_{ii}||v,_{jjkk}|\,dx+\int_{\TT}|v,_{\ell\ell}||v,_{ii}||v,_{jjkk}|\,dx}\nonumber\\
&\le c\pare{\|\D^2v\|_{A^0}\|\N v\|_{L^4}^2\|v\|_{H^2}+\|\D^2v\|_{A^0}\|v\|_{H^2}^2}\nonumber\\
&\le c\pare{\|v\|_{A^4}\|v\|_{H^2}^2\|v\|_{A^0}+\|v\|_{A^4}\|v\|_{H^2}^2}\nonumber\\
&\le c(1+\|v\|_{A^0})\|v\|_{A^4}\|v\|_{H^2}^2.\label{estimateA}
\end{align}
As far as $I_3$ is concerned, we use \eqref{l2} with $w=v$ and estimate as
\begin{align*}
\av{I_3}&\le c\pare{\int_{\TT}
|v,_{\ell}||v,_{\ell}||v,_{jjkk}|\,dx+\int_{\TT}|v,_{\ell\ell}||v,_{jjkk}|\,dx}\\
&\le c\pare{\|\D^2v\|_{A^0}\|\N v\|_{L^2}^2+\|\D^2v\|_{A^0}\|v\|_{H^2}}\\
&\le c\pare{\|v\|_{A^4}\|v\|_{H^2}\|v\|_{A^0}+\|v\|_{A^4}\|v\|_{H^2}}\\
&\le c(1+\|v\|_{A^0})\|v\|_{A^4}\|v\|_{H^2}.
\end{align*}
We now deal with the $I_2$ term. We want an estimate of the type \eqref{estimateA}. In order to do so, we proceed with a chain of integration by parts aimed at rewriting $I_2$ in a handier form. We have
\begin{align*}
I_2&=\int_{\TT}G_n'(v)v,_{\ell}v,_{ii\ell}v,_{jjkk}\,dx\\
&=-\int_{\TT}(G_n'(v)v,_{\ell}v,_{ii\ell}),_kv,_{jjk}\,dx\\
%%%
%%%
&=-\int_{\TT}G_n''(v)v,_kv,_{\ell}v,_{ii\ell}v,_{jjk}\,dx
-\int_{\TT}G_n'(v)v,_{\ell k}v,_{ii\ell}v,_{jjk}\,dx -\int_{\TT}G_n'(v)v,_{\ell}v,_{ii\ell k}v,_{jjk}\,dx.
\end{align*}
We follow dealing with the last integral in the above equality:
\begin{align*}
-\int_{\TT}G_n'(v)v,_{\ell}v,_{ii\ell k}v,_{jjk}\,dx
&=\int_{\TT}(G_n'(v)v,_{\ell}v,_{jjk}),_iv,_{i\ell k}\,dx\\
%%%
%%%
&=\int_{\TT}G_n''(v)v,_iv,_{\ell}v,_{jjk}v,_{i\ell k}\,dx
+\int_{\TT}G_n'(v)v,_{\ell i}v,_{jjk}v,_{i\ell k}\,dx\\
%%%
&\q
+\int_{\TT}G_n'(v)v,_{\ell}v,_{jjki}v,_{i\ell k}\,dx.
\end{align*}
We are in the same situation as before. Then
\begin{align*}
\int_{\TT}G_n'(v)v,_{\ell}v,_{jjki}v,_{i\ell k}\,dx=
&-\int_{\TT}(G_n'(v)v,_{\ell}v,_{i\ell k}),_jv,_{jki}\,dx\\
%%%
%%%
=&-\int_{\TT}G_n''(v)v,_jv,_{\ell}v,_{i\ell k}v,_{jki}\,dx
-\int_{\TT}G_n'(v)v,_{\ell j}v,_{i\ell k}v,_{jki}\,dx\\
%%%
&\q
-\int_{\TT}G_n'(v)v,_{\ell}v,_{i\ell kj}v,_{jki}\,dx.
\end{align*}
We finally can say that
\begin{align*}
%%%
%%%
-\int_{\TT}G_n'(v)v,_{\ell}v,_{i\ell kj}v,_{jki}\,dx&=-\frac{1}{2}\int_{\TT}G_n'(v)v,_{\ell}(v,_{jki}v,_{jki}),_\ell\,dx\\
&=\frac{1}{2}\int_{\TT}(G_n'(v)v,_{\ell}),_\ell v,_{jki}v,_{jki}\,dx\\
%%%
&=\frac{1}{2}\int_{\TT}G_n''(v)v,_\ell v,_{\ell} v,_{jki}v,_{jki}\,dx
+\frac{1}{2}\int_{\TT}G_n'(v) v,_{\ell\ell} v,_{jki}v,_{jki}\,dx.
\end{align*}
Resuming
\begin{align*}
I_2&=-\int_{\TT}G_n''(v)v,_kv,_{\ell}v,_{ii\ell}v,_{jjk}\,dx
-\int_{\TT}G_n'(v)v,_{\ell k}v,_{ii\ell}v,_{jjk}\,dx+\int_{\TT}G_n''(v)v,_iv,_{\ell}v,_{jjk}v,_{i\ell k}\,dx\\
%%%
&\q
+\int_{\TT}G_n'(v)v,_{\ell i}v,_{jjk}v,_{i\ell k}\,dx
-\int_{\TT}G_n''(v)v,_jv,_{\ell}v,_{i\ell k}v,_{jki}\,dx
-\int_{\TT}G_n'(v)v,_{\ell j}v,_{i\ell k}v,_{jki}\,dx
\\
%%%
&\q+\frac{1}{2}\int_{\TT}G_n''(v)v,_\ell v,_{\ell} v,_{jki}v,_{jki}\,dx
+\frac{1}{2}\int_{\TT}G_n'(v) v,_{\ell\ell} v,_{jki}v,_{jki}\,dx.
\end{align*}
Since the last expression of $I_2$ is made up of integrals involving only first and third or second and third order derivatives, we just focus on the first two integrals in the r.h.s.. The inequalities in \eqref{l4} with $w=v$, \eqref{l42} with $w=\D v$ and the fact that $\|f\|_{W^{\al,\infty}}\le \|f\|_{A^\al}$ yield to
\begin{align*}
\int_{\TT}G_n''(v)v,_kv,_{\ell}v,_{ii\ell}v,_{jjk}\,dx&\le c\int_{\TT} |v,_k||v,_{\ell}||v,_{ii\ell}||v,_{jjk}|\,dx\\%\\
&\le \norm{\N u}_{L^4}^2\norm{\N\D u}_{L^4}^2\\
&\le c\|v\|_{A^0} \| v\|_{H^2}^2\|v\|_{A^4},
\\
\int_{\TT}G_n'(v)v,_{\ell k}v,_{ii\ell}v,_{jjk}\,dx&\le c\int_{\TT} |v,_{\ell k}||v,_{ii\ell}||v,_{jjk}|\,dx \\
& \le c\|v\|_{H^2}\|\N\D v\|_{L^4}^2 \\
&\le c\| v\|_{H^2}^2\|v\|_{A^4}.
\end{align*}
Then, we estimate $I_2$ as
\[
I_2\le c(1+\|v\|_{A^0}) \| v\|_{H^2}^2\|v\|_{A^4}
\]
and, consequently, the main inequality in \eqref{abc} becomes
\[
\frac{d}{dt}\|v\|_{H^2}^2\le c(1+\|v\|_{A^0})\|v\|_{A^4}\pare{1+\| v\|_{H^2}^2}.
\]
Standard computations lead to
\begin{align*}
\|v(t)\|_{H^2}&\le e^{c(1+\|v\|_{L^\infty(A^0)})\int_0^t\|v(s)\|_{A^4}\,ds}\\
&\q\times
\left[
\|v_0\|_{H^2}+C(1+\|v\|_{L^\infty(A^0)})\int_0^t \|v(s)\|_{A^4}
e^{-(1+\|v\|_{L^\infty(A^0)})\int_0^s\|u(\tau)\|_{A^4}\,d\tau}\,ds
\right]\\
&\le e^{c(1+\|v\|_{L^\infty(A^0)})\int_0^t\|v(s)\|_{A^4}\,ds}
\left[
\|v_0\|_{H^2}+C(1+\|v\|_{L^\infty(A^0)})\int_0^t \|v(s)\|_{A^4}
\,ds
\right]\\
&\le c(\norm{v_0}_{A^0}),
\end{align*}
where the last inequality is due to \cref{teoex} (see \eqref{disA4} and comments below). In particular, we have deduced that $v$ is uniformly bounded in $L^\infty(0,T;H^2(\TT))$.\\

The regularity $L^2(0,T;H^4(\TT))$ follows integrating in time the inequality  in \eqref{star} and taking into account the estimates on the time-dependent terms $I_1,\,I_2,\,I_3$:
\begin{align}
\frac{1}{2}\|v(t)\|_{H^2}^2 +\iint_{(0,t)\times\TT}|\D^2u|^2\,dx\,ds&\le\int_0^tI_1(s)+I_1(s)+I_3(s)\,ds+\frac{1}{2}\|v_0\|_{H^2}^2\nonumber\\
&\le c(1+\|v\|_{L^\infty(A^0)})\pare{1+\| v\|_{L^\infty(H^2)}^2}\int_0^t\|v(s)\|_{A^4}\,ds+\frac{1}{2}\|v_0\|_{H^2}^2\nonumber\\
&\le c(\norm{v_0}_{A^0})\label{dis}.
\end{align}

\end{proof}

\begin{proposition}[Compactness results]\label{propcomp}
Let the parameters verify \eqref{poscost-bis}. Assume that $v_0\in A^0(\TT)\cap H^2(\TT)$ satisfies
\[
\norm{v_0}_{A^0}<1,
\]
and the smallness condition in \eqref{smallness} (where $\deu ,\,\ded $ have been defined in \eqref{de1}, \eqref{de2}). Then, up to subsequences, we  have that every approximating sequence of solutions $\{v_n\}_n$ of \eqref{appr} verifies
\begin{equation}\label{L2Hr}
 v_n\rightarrow v\q \t{ in}\q L^2(0,T;H^r(\TT)),\;0\leq r<4.
\end{equation}
Furthermore, the limit function $u$ satisfies
\begin{equation}\label{CH2}
v\in{C}([0,T];H^2(\TT)).
\end{equation}
\end{proposition}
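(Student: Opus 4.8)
The plan is to establish the strong convergence \eqref{L2Hr} via a compactness argument of Aubin--Lions--Simon type, exactly as was done for the lower-regularity setting in \cref{corregw}, but now upgrading the spatial regularity from $H^2$ to $H^4$. The two ingredients I would combine are the uniform bound on $\{v_n\}_n$ in $L^2(0,T;H^4(\TT))\cap L^\infty(0,T;H^2(\TT))$ coming from \cref{propreg}, together with a uniform bound on the time derivatives $\{\pat v_n\}_n$ in a suitable negative-order space. With these in hand, the classical compactness result quoted in \cite[Corollary $4$]{S} yields strong convergence in an intermediate space, and interpolation then upgrades this to the full range $0\le r<4$.

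First I would obtain the time-derivative bound. Testing the equation in \eqref{appr} against $\vp\in H^2(\TT)$ and integrating by parts to move two Laplacians onto $\vp$, I would write
\[
\left|\int_\TT \pat v_n\,\vp\,dx\right|\le \left|\int_\TT\left(1+G_n(v_n)\right)\D v_n\,\D\vp\,dx\right|+\left|\int_\TT G_n'(v_n)\D\vp\,dx\right|,
\]
which is precisely the estimate already carried out inside the proof of \cref{corregw}. Using the uniform boundedness of $G_n(v_n)$ and $G_n'(v_n)$ (established there via $\norm{v_n(t)}_{A^0}\le\norm{v_0}_{A^0}<1$), this gives $\|\pat v_n(t)\|_{H^{-2}}\le c\left(1+\|v_n(t)\|_{H^2}\right)$, and since $\{v_n\}_n$ is uniformly bounded in $L^\infty(0,T;H^2(\TT))$ by \cref{propreg}, one concludes that $\{\pat v_n\}_n$ is uniformly bounded in $L^2(0,T;H^{-2}(\TT))$, hence in particular equi-integrable in time.

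Next I would invoke the Aubin--Lions--Simon lemma with the triple $H^4(\TT)\hookrightarrow\hookrightarrow H^r(\TT)\hookrightarrow H^{-2}(\TT)$, the first embedding being compact for $r<4$. Combining the uniform $L^2(0,T;H^4)$ bound from \eqref{H2H4}--\eqref{dis} with the $L^2(0,T;H^{-2})$ bound on $\pat v_n$ just obtained produces, up to a subsequence, strong convergence $v_n\to v$ in $L^2(0,T;H^r(\TT))$ for any $r<4$, which is \eqref{L2Hr}. Finally, for the continuity \eqref{CH2} I would combine the uniform bound in $L^\infty(0,T;H^2(\TT))$ with $\pat v_n\in L^2(0,T;H^{-2}(\TT))$: the standard interpolation/Lions--Magenes result gives $v_n\in C([0,T];H^s(\TT))$ for $s<2$ with uniform control, and then identifying the weak limit together with $v\in L^\infty(0,T;H^2)$ and $\pat v\in L^2(0,T;H^{-2})$ yields $v\in C([0,T];H^2(\TT))$ after an approximation argument, exactly paralleling the derivation of \eqref{ct} in \cref{corregw}.

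The main obstacle I anticipate is not the functional-analytic machinery, which is routine once the bounds are available, but rather ensuring that the nonlinear flux terms $G_n(v_n)\D v_n-G_n'(v_n)$ pass to the limit consistently so that the negative-norm bound on $\pat v_n$ is genuinely uniform in $n$; this hinges on the $n$-uniform boundedness of $G_n$ and its derivatives, which is legitimate precisely because the $A^0$-decay from \cref{prioriw} keeps $\norm{v_n(t)}_{A^0}$ strictly below $1$ for all $t$ and all $n$. Since all of these bounds are already secured in the preceding propositions, the proof reduces to assembling them correctly and invoking the compactness and continuity-in-time results, so I would keep the write-up short by referring back to \cref{corregw,propreg} rather than repeating the computations.
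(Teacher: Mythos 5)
Your treatment of \eqref{L2Hr} is correct, and it takes a slightly different route from the paper: you re-run the Aubin--Lions--Simon machinery at the top level, with the triple $H^4(\TT)\hookrightarrow\hookrightarrow H^r(\TT)\hookrightarrow H^{-2}(\TT)$, the $L^2(0,T;H^4)$ bound from \cref{propreg} and the $L^2(0,T;H^{-2})$ bound on $\pat v_n$ recycled from \cref{corregw}. The paper is more economical: it does not invoke Simon's result again, but simply interpolates, writing
\[
\int_0^t\|v_n-v\|_{H^r}^2\,ds\le
\Bigl(\int_0^t\|v_n-v\|_{H^4}^2\,ds\Bigr)^{\frac r4}
\Bigl(\int_0^t\|v_n-v\|_{L^2}^2\,ds\Bigr)^{\frac{4-r}4},
\]
where the first factor is bounded by \cref{propreg} (and weak lower semicontinuity for $v$) and the second tends to zero by the strong convergence \eqref{LL} already secured in \cref{corregw}. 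Your version buys nothing extra and costs a diagonal extraction (your subsequence a priori depends on $r$, which your own interpolation remark fixes), but it is sound.

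The continuity step \eqref{CH2}, however, has a genuine gap as written. The hypotheses you invoke --- $v\in L^\infty(0,T;H^2(\TT))$ together with $\pat v\in L^2(0,T;H^{-2}(\TT))$ --- do \emph{not} imply $v\in C([0,T];H^2(\TT))$: they yield only $v\in C_w([0,T];H^2(\TT))$ (weak continuity), plus strong continuity in $H^s$ for $s<2$, and no ``approximation argument'' can close this gap from those bounds alone, since one can construct functions satisfying both hypotheses whose $H^2$-valued trajectory oscillates among nearly orthogonal high-frequency states and is not strongly continuous. The standard Lions--Magenes lemma needs the regularity and the time derivative to sit \emph{symmetrically} about the target space: to land in $C([0,T];H^2)$ you must pair $v\in L^2(0,T;H^4(\TT))$ --- which you have from \cref{propreg} but do not use here --- with $\pat v\in L^2(0,T;L^2(\TT))$. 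This is exactly what the paper does, phrased as: $\D v\in L^2(0,T;H^2(\TT))$ and $\pat\D v\in L^2(0,T;H^{-2}(\TT))$, whence $\D v\in C([0,T];L^2(\TT))$, which combined with \eqref{ct} gives \eqref{CH2}. Note also that proving this stronger time-derivative bound is not just the $H^{-2}$ estimate of \cref{corregw} over again: one must control in $L^2((0,T)\times\TT)$ terms such as $G_n'(v_n)\N v_n\cdot\N\D v_n$ and $G_n''(v_n)|\N v_n|^2\D v_n$, using the $n$-uniform boundedness of $G_n$ and its derivatives together with the $L^2(0,T;H^4)$ bound and the pointwise/gradient bounds (e.g.\ \eqref{ex3}), rather than only the boundedness of $G_n,\,G_n'$.
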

\begin{proof}
H\"older's inequality implies
\begin{align*}
\int_0^t\|v_n(s)-v(s)\|_{H^r}^2\,ds&\le \int_0^t\|v_n(s)-v(s)\|_{H^4}^\frac{r}{2}\|v_n(s)-v(s)\|_{L^2}^{\frac{4-r}{2}}\,ds\\
&\le
\left(\int_0^t\|v_n(s)-v(s)\|_{H^4}^2\,ds\right)^{\frac{r}{4}}
\left( \int_0^t\|v_n(s)-v(s)\|_{L^2}^2\,ds\right)^{\frac{4-r}{4}}
\end{align*}
which converges to zero by \eqref{LL}, then \eqref{L2Hr} follows.

The continuity regularity \eqref{CH2} can be proved as in \cref{corregw}, namely using that
\[
\D v\in L^2(0,T;H^2(\TT))
\]
from  \cref{propreg}, and then proving that
\[
 \pat\D v\in L^2(0,T;H^{-2}(\TT)).
\]
\end{proof}

\begin{proof}[Proof of \cref{teoreg}]
The regularity \eqref{reg1} follows from \cref{propreg,propcomp}.\\
The energy inequality in \eqref{H2H4} is a consequence of \eqref{dis}.%\\

\end{proof}

\section{Final remarks}

\subsection{On the uniqueness}
The techniques previously employed do not provide us with enough regularity to prove uniqueness results. Indeed, we  just have $v\in \mathcal{M}(0,T;W^{4,\infty}(\TT))$ but we would need
\[
v\in L^1(0,T;A^4(\TT))
\]
in order to  get a comparison result between sub and supersolution of \eqref{pb} reasoning as in \cref{prioriw}.\\
A possible approach is the one exploited by J.-G. Liu and R. Strain in \cite{LS}: here, under the assumption of medium size data belonging to $A^2$, the authors first proved the uniqueness of solutions to
\[
\pat u=\D e^{-\D u}\qq\text{in }(0,T)\times\R^N,
\]
and then they took advantage of this information to obtain uniqueness in $A^0(\R^N)$. We explicitly point out that the uniqueness class does not coincide with the existence class.

\subsection{A  fixed point technique}

Another possible way to approach problems of \eqref{pb} type is the one proposed by D. M. Ambrose in \cite{A}. Here, the author proves existence and analyticity results for a fourth order problem which describes crystal growth surfaces through fixed point techniques. The equation in object is \eqref{exp}.
%\[
%\pat u=\D e^{-\D u}\qq\text{in }(0,T)\times\TT.
%\]
The main contributions provided by \cite{A} concern both the proof of the analiticity and the improvement of the smallness condition on the $A^0$ semi-norm of the initial datum w.r.t. \cite{GM}, where the same problem is studied.\\
Hence, due to the nature of the model in \eqref{pb}, it could be interesting to try by means of similar arguments as in \cite{A}.

\section*{Acknowledgments}
The author is very grateful to the anonymous referees for their careful reading and for their significant contribution to improve the initial version of this paper.\\
The author thanks Rafael Granero-Belinchón for some stimulating discussions and comments.\\
This work  was  supported by  Grant RYC2021-033698-I, funded by the Ministry of Science and Innovation/State Research Agency/10.13039/501100011033 and by the European Union "NextGenerationEU/Recovery, Transformation and Resilience Plan".

\end{document}